\newcommand{\eps}{\varepsilon}
\renewcommand{\d}{\,\mathrm{d}}
\newcommand{\E}[1]{\mathbb{E}\left[#1\right]} 
\newcommand{\no}[1]{\Vert#1\Vert} 
\newcommand{\nos}[1]{\Vert#1\Vert^2} 
\newcommand{\be}[1]{\vert#1\vert} 
\newcommand{\bes}[1]{\vert#1\vert^2} 
\renewcommand{\E}[1]{\mathbb{E}\left[#1\right]} 
\renewcommand{\no}[1]{\Vert#1\Vert} 
\renewcommand{\nos}[1]{\Vert#1\Vert^2} 
\renewcommand{\be}[1]{\vert#1\vert} 
\renewcommand{\bes}[1]{\vert#1\vert^2} 
\newcommand{\fe}[1]{\frac{\nabla #1}{ \sqrt{ \vert \nabla #1 \vert^2 +\eps^2}}} 
\newcommand{\R}{\mathbb{R}}
\newcommand{\N}{\mathbb{N}}
\newcommand{\into}{\int_{\mathcal{O}}}
\newcommand{\ska}[1]{\left( #1 \right)} 
\renewcommand{\div}{\mathrm{div}}
\newcommand{\intt}{\int_0^t}
\newcommand{\F}{\mathcal{F}}
\renewcommand{\Xi}{X_{\eps,n,h}^{i}}
\newcommand{\Xii}{X_{\delta,n}^{i}}
\newcommand{\xii}{x_{\eps,n}^i}
\newcommand{\Xmin}{X_{\eps,n,h}^{i-1}}
\newcommand{\Xmi}{X_{\delta,n}^{i-1}}
\renewcommand{\xii}{X_{\eps,h}^{i}} 
\newcommand{\xmi}{X_{\eps,h}^{i-1}} 
\newcommand{\Zi}{Z^{i}_{\eps}}
\newcommand{\Zii}{Z^{i}_{\eps}}
\newcommand{\Zmi}{Z^{i-1}_{\eps}}
\newcommand{\Hz}{\mathbb{H}^1_0}
\renewcommand{\L}{\mathbb{L}^2}
\newcommand{\Hm}{\mathbb{H}^{-1}}
\renewcommand{\phi}{\varphi}
\renewcommand{\F}{\mathcal{F}}
\renewcommand{\P}{\mathbb{P}}
\newcommand{\vh}{\Phi_h}
\renewcommand{\O}{\mathcal{O}}
\newcommand{\D}{\O}
\newcommand{\Xdn}{X^{\delta}_{n}}
\newcommand{\Xe}{X^{\delta}}
\newcommand{\Xee}{X^{\eps}}
\newcommand{\Yc}{\overline{X}_{\tau}^{\delta,n}}
\newcommand{\Ycm}{\overline{X}_{\tau_-}^{\delta,n}}
\newcommand{\Xc}{\overline{X}_{\tau,h}^{\eps,n}}
\renewcommand{\F}{\mathcal{F}}
\renewcommand{\P}{\mathbb{P}}
\renewcommand{\O}{\mathcal{O}}
\newcommand{\Je}{\mathcal{J}_{\eps,\lambda}}
\newcommand{\Jeps}{\mathcal{J}_\eps}
\newcommand{\weak}{\rightharpoonup}
\newtheorem{thms}{Theorem}[section]
\newtheorem{defs}{Definition}[section]
\newtheorem{cors}{Corollary}[section]
\newtheorem{props}{Proposition}[section]
\newtheorem{bems}{Remark}[section]
\newtheorem{lems}{Lemma}[section]
\newcommand{\xx}{\mathbf{x}}
\begin{document}
\title[Numerical approximation of the stochastic TV flow in $d\geq1$]{Convergent numerical approximation of the stochastic total variation flow with linear multiplicative noise: the higher dimensional case}

\author{\v{L}ubom\'{i}r Ba\v{n}as}
\address{Department of Mathematics, Bielefeld University, 33501 Bielefeld, Germany}
\email{banas@math.uni-bielefeld.de}
\author{Michael R\"ockner}
\address{Department of Mathematics, Bielefeld University, 33501 Bielefeld, Germany and Academy of Mathematics and Systems Science, CAS, Beijing}
\email{roeckner@math.uni-bielefeld.de}
\author{Andr\'e Wilke}
\address{Department of Mathematics, Bielefeld University, 33501 Bielefeld, Germany}
 \email{ awilke@math.uni-bielefeld.de}

\thanks{Funded by the Deutsche Forschungsgemeinschaft (DFG, German Research Foundation) – SFB 1283/2 2021 – 317210226.}

\begin{abstract}
We consider fully discrete finite element approximation
of the stochastic total variation flow equation (STVF) with linear multiplicative noise
which was previously proposed in \cite{our_paper}.
Due to lack of a discrete counterpart of stronger a priori estimates in higher spatial dimensions
the original convergence analysis of the numerical scheme was limited to one spatial dimension, cf. \cite{stvf_erratum}.
In this paper we generalize the convergence proof to higher dimensions.
\end{abstract}

\maketitle

\section{Introduction}

We study the convergence of numerical approximation of the stochastic total variation flow (STVF) equation
\begin{align}\label{TVF}
\d X&= \div\left(\frac{\nabla X}{\be{\nabla X}}\right) \d t -\lambda (X - g) \d t +X\d W, &&\text{in } (0,T)\times \O, \nonumber\\
X & = 0 && \text{on } (0,T)\times \partial \O, \\
X(0)&=x_0 &&\text{in } \O, \nonumber
\end{align}
where $\O \in \R^d $, $d\geq 1$,
is a bounded, convex polyhedral domain, $\lambda \geq 0$, $T>0$ are constants and $x_0,\, g \in \L$.
For simplicity we take $W$ to be a one dimensional real-valued Wiener process. 

{We adopt the approach from \cite{our_paper} and construct a fully discrete approximation scheme (cf. (\ref{full_eps_TVF_L2_data}) below) of (\ref{TVF}) using a regularization approach.}
Given a regularization parameter $\eps >0$ we consider the following regularized problem 
\begin{align}\label{eps.TVF}
 \d X^{\eps}&=  \div\left(\frac{\nabla X^{\eps}}{\sqrt{|\nabla X^{\eps} |^2+\eps^2}}\right)\d t-\lambda(X^{\eps}-g)\d t+X^{\eps}\d W &&\text{in } (0,T)\times \O, \nonumber\\
 X^{\eps} & = 0 && \text{on } (0,T)\times \partial \O, \\
 X^{\eps}(0)&=x_0 &&\text{in } \O\,.  \nonumber
\end{align}

{Equations (\ref{TVF}), (\ref{eps.TVF}), respectively, admit unique solutions in the sense of stochastic variational inequalities, see \cite{Roeckner_TVF_paper}, \cite{our_paper}, \cite{stvf_weak}. 
Throughout the paper we refer to the solutions of (\ref{TVF}), (\ref{eps.TVF}) as SVI solutions, see Definition~\ref{def_svi} below.
The first numerical approximation of \eqref{TVF} was constructed in \cite{our_paper} and its convergence was shown by considering the full discretization of the regularized problem (\ref{eps.TVF})
as an intermediate step.
The convergence proof of the numerical approximation in \cite{our_paper} relies on the discrete counterpart of
a priori estimates in stronger norm (cf. Lemma~\ref{laplace_energy_estimate} below), which are so-far restricted to spatial dimension $d=1$, cf. \cite{stvf_erratum}.}
The recent work \cite{stvf_weak} shows convergence of numerical approximation with a random walk representation of the noise
to probabilistically weak SVI solutions of (\ref{TVF}). The numerical analysis in \cite{stvf_weak} is valid in higher spatial dimensions $d\geq 1$, but does not cover the case of linear multiplicative noise, except for $d=1$.
In this work we show convergence of the numerical approximation of the stochastic total variation flow (\ref{TVF}) with linear multiplicative noise in spatial dimension $d>1$.

The paper is organized as follows. In Section~\ref{sec_not} we introduce the notation and state some auxiliary results.
The existence of a unique SVI solution of the regularized  stochastic TV flow (\ref{eps.TVF})
and its convergence towards a unique SVI solution of (\ref{TVF}) 
is discussed in Section~\ref{sec_exist}.
In Section~\ref{sec_semi_num} we introduce a time semi-discrete numerical scheme for the regularizared problem (\ref{reg.TVF}) below
and show its convergence to the variational solution of (\ref{reg.TVF}) for initial data with higher regularity.
Finally, in Section~\ref{sec_full_num} we show the convergence  of the fully discrete finite element scheme for the regularizared problem (\ref{eps.TVF})
and show its convergence to the SVI solution of (\ref{TVF}). 

\section{Notation and preliminaries}\label{sec_not}

Throughout the paper by $C$ we denote  a generic positive constant that may change from line to line. 
By $\mathbb{L}^p:=L^p(\D)$ for $1\leq p \leq \infty  $ we denote the standard spaces of $p$-th order integrable functions on $\O$;
we use $\no{\cdot}= \no{\cdot}_{\L}$ for the $\L$-norm and $(\cdot,\cdot)=(\cdot,\cdot)_{\L}$ for the $\L$-inner product. 
 For $k,p \in \N$ we denote the usual Sobolev space  on $\O$ by $(\mathbb{W}^{p,k},\no{\cdot}_{\mathbb{W}^{p,k}})$; for $p=2$ we use $\mathbb{H}^k:=\mathbb{W}^{2,k}$.
Furthermore $\Hz$ stands for the $\mathbb{H}^1$ space with zero trace on $\partial \O$ with its dual denoted as $\Hm$ and
we set $\langle \cdot ,\cdot \rangle=\langle \cdot ,\cdot \rangle_{\Hm \times \Hz}$, where $\langle \cdot ,\cdot \rangle_{\Hm \times \Hz}$ 
is the duality pairing between $\Hz$ and $\Hm$.  

For $u\in \Hz$ we consider the energy functional 
\begin{align*}
\Je(u)= \into \sqrt{\bes{\nabla u} +\eps^2} \d \xx + \frac{\lambda}{2} \into \bes{u-g} \d \xx\,,
\end{align*}  
With a slight abuse of notation we set $\mathcal{J}_\eps := \mathcal{J}_{\eps,0}$ if $\lambda=0$ and $\mathcal{J}_\lambda := \mathcal{J}_{0,\lambda}$ if $\eps=0$.

Next, we state basic definitions related to the functions of bounded variation.
\begin{defs}\label{Bounded Variation} 
 A function $u \in L^1(\O)$ is called a function of bounded variation, if its total variation
 \begin{align*}
 \into \be{\nabla u}\d\xx  := \sup\left\{-\into u\, \div\, \mathbf{v} \d\xx;~ \mathbf{v} \in C^{\infty}_0(\O,\R^d), ~\no{\mathbf{v}}_{L^{\infty}}\leq 1\right\},
 \end{align*}
 is finite. The space of functions of bounded variation is denoted by $BV(\O)$.

Furthermore, for $u \in BV(\O)$ we set
$$
 \into \sqrt{\be{\nabla u}^2 + \eps^2}\d\xx  := \sup\left\{\into \Big(-u\, \div\, \mathbf{v} + \eps\sqrt{1-|\mathbf{v}|^2}\Big)\d\xx;~ \mathbf{v} \in C^{\infty}_0(\O,\R^d), ~\no{\mathbf{v}}_{L^{\infty}}\leq 1\right\}\,.
$$
\end{defs}

\section{The continuous problem}\label{sec_exist}

In this section we construct a unique SVI solution of (\ref{TVF}) (see Definition~\ref{def_svi} below)
via a two-level regularization procedure. 
{Given the data $x_0 \in L^2(\Omega,\F_0;\L)$, $g\in \L$
we consider an $\Hz$-approximating sequences {$\{x^n_0\}_{n\in \mathbb{N}} \subset L^2(\Omega,\F_0;\Hz)$, $\{g^n\}_{n\in \mathbb{N}} \subset \Hz$,
s.t.  $x^n_0 \rightarrow x_0$, $g^n \rightarrow g$ in $L^2(\Omega,\F_0; \L)$ for $n\rightarrow \infty$, respectively.} 
For $\delta>0$ we  introduce a regularization of (\ref{eps.TVF}) as
\begin{align}\label{reg.TVF}
 \d \Xdn=& \delta \Delta \Xdn + \div\left(\frac{\nabla \Xdn}{\sqrt{|\nabla \Xdn |^2+\eps^2}}\right)\d t 
-\lambda(\Xdn-g_n)\d t+\Xdn\d W(t),\\
 \nonumber \Xdn(0)=&x^n_0\,.
\end{align} 
}

We define the operator $A^{\delta} : \Hz \rightarrow \Hm$ as
\begin{align}\label{Operator}
{\langle A^{\delta} u, v\rangle_{\Hm \times \Hz}=  \delta \left(\nabla u,\nabla v\right)  + \left(\fe{u},\nabla v\right) +\lambda \left(u-   g_n,v\right)   ~~\forall  u,v \in \Hz },
\end{align}
and note that \eqref{reg.TVF} can be equivalently formulated as
\begin{align}
\d \Xdn +A^{\delta} \Xdn \d t&=\Xdn \d W(t)\,,\\
\Xdn(0)&=x^n_0. \nonumber
\end{align}

The operator $A^{\delta} : \Hz \rightarrow \Hm$  is demicontinuos and satisfies (cf. \cite[Remark 4.1.1]{Roeckner_book})
\begin{align}
&\langle A^{\delta}(u)-A^{\delta}(v),u-v\rangle_{\Hm \times \Hz}\geq  \delta\nos{u-v}_{\Hz} + \lambda\nos{u-v}, && \forall u,v \in \Hz,\label{Monotonicity}\\
& \no{A^{\delta}(u)}_{\Hm} \leq C(\delta, \lambda,\|g_n\|)(\no{u}_{\Hz}+1), && \forall u \in \Hz.\label{a_bnd}
\end{align}
We recall that the convexity of the function $\sqrt{\bes{\cdot}+\eps^2}$ implies the monotonicity property
\begin{align}\label{eps.convexity.inequality}
&\ska{\fe{X}-\fe{Y},\nabla(X-Y)}
\nonumber\\
&\qquad  = \ska{\fe{X},\nabla(X-Y)}+\ska{\fe{Y},\nabla(Y-X)}
\\
& \qquad \geq  \Jeps(X)-\Jeps(Y)+\Jeps(Y)-\Jeps(X)=0.
\nonumber
\end{align}

The well-posedness of the regularized problem \eqref{reg.TVF} follows from standard theory of monotone SPDEs, see for instance \cite[Chapter 4]{Roeckner_book} and \cite{our_paper}.
\begin{lems}\label{lem_eps_exist}
For any $\eps,\delta\,>0$ and $x^n_0 \in L^2(\Omega,\F_0;\Hz)$,  $g_n\in \Hz$ 
there exists a unique variational solution $\Xe_n \in L^2(\Omega;C([0,T];\L))$ 
of \eqref{reg.TVF}. Furthermore, there exists a constant $C\equiv C(T)>0$ such that the following estimate holds
\begin{align*}
\E{\sup_{t \in [0,T]} \nos{\Xe_n(t)}} \leq C (\E{\nos{x_0^n}} + \|g_n\|^2).
\end{align*}
\end{lems}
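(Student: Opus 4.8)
The plan is to cast the monotone SPDE \eqref{reg.TVF}, written as $\d \Xe_n + A^{\delta}\Xe_n\,\d t = \Xe_n\,\d W$, into the variational framework for SPDEs with monotone coefficients (see \cite[Chapter~4]{Roeckner_book}) associated with the Gelfand triple $\Hz \hookrightarrow \L \hookrightarrow \Hm$, and then to extract the a priori bound by Itô's formula. First I would note that, since $W$ is one-dimensional, the diffusion coefficient $B(u):=u$ viewed as a map $\L\to\L$ is globally Lipschitz with $\no{B(u)-B(v)}=\no{u-v}$ and of linear growth. The drift $-A^{\delta}$ is demicontinuous by assumption, and the required (weak) monotonicity follows from \eqref{Monotonicity}, since
\[
2\langle -A^{\delta}u+A^{\delta}v,\,u-v\rangle + \nos{B(u)-B(v)} \leq -2\delta\nos{u-v}_{\Hz}-2\lambda\nos{u-v}+\nos{u-v} \leq \nos{u-v},
\]
while the growth bound \eqref{a_bnd} gives $\no{A^{\delta}u}_{\Hm}\leq C(\no{u}_{\Hz}+1)$, which is the boundedness/growth condition required by the variational framework.

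The remaining hypothesis is coercivity, and this is the step where the $\delta$-regularization is essential: using $(\fe{u},\nabla u)\geq 0$ together with Young's inequality applied to $\lambda(g_n,u)$, one obtains
\[
2\langle -A^{\delta}u,\,u\rangle + \nos{B(u)} \leq -2\delta\nos{\nabla u}-\lambda\nos{u}+\lambda\nos{g_n}+\nos{u},
\]
so by the Poincaré inequality the right-hand side is dominated by $-c\nos{u}_{\Hz}+C\nos{u}+\lambda\nos{g_n}$ for some $c>0$. With hemicontinuity, monotonicity, coercivity and the growth bound verified, the general existence and uniqueness theorem for variational solutions yields a unique $\Xe_n\in L^2(\Omega;C([0,T];\L))\cap L^2(\Omega\times(0,T);\Hz)$.

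For the energy estimate I would apply the Itô formula for $\nos{\Xe_n(t)}$ in the triple, which gives
\[
\nos{\Xe_n(t)} = \nos{x_0^n} + \intt\big(2\langle -A^{\delta}\Xe_n,\Xe_n\rangle + \nos{\Xe_n}\big)\d s + 2\intt\nos{\Xe_n}\,\d W(s).
\]
The coercivity computation bounds the drift integrand by $\nos{\Xe_n}+\lambda\nos{g_n}$, while for the martingale term I would invoke the Burkholder--Davis--Gundy inequality and the Young-type splitting $\big(\int_0^T\no{\Xe_n}^4\,\d s\big)^{1/2}\leq \eta\,\supt\nos{\Xe_n(t)}+C_{\eta}\int_0^T\nos{\Xe_n}\,\d s$ to absorb a small multiple of $\E{\supt\nos{\Xe_n(t)}}$ into the left-hand side. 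Taking the supremum over $t\in[0,T]$ and then expectations leaves $\E{\supt\nos{\Xe_n(t)}}\leq C\big(\E{\nos{x_0^n}}+\nos{g_n}+\int_0^T\E{\sup_{s\in[0,t]}\nos{\Xe_n(s)}}\,\d t\big)$, and Gronwall's lemma gives the asserted bound. I expect the only genuine (though standard) obstacle to be this martingale absorption step, which hinges on the BDG estimate and a judicious choice of $\eta$; the verification of the structural conditions is immediate from the already-recorded properties \eqref{Monotonicity}--\eqref{a_bnd} of $A^{\delta}$.
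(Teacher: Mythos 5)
Your proposal is correct and follows exactly the route the paper intends: the paper gives no separate proof of this lemma but simply invokes the standard variational theory of monotone SPDEs (\cite[Chapter 4]{Roeckner_book}, \cite{our_paper}), and your verification of demicontinuity, weak monotonicity via \eqref{Monotonicity}, coercivity (using $(\fe{u},\nabla u)\geq 0$ and Poincar\'e), and the growth bound \eqref{a_bnd}, followed by the It\^o/BDG/Gronwall argument for the supremum estimate, is precisely the content of that standard theory. The only point worth noting is that the absorption of $\eta\,\E{\supt\nos{\Xe_n(t)}}$ requires this quantity to be finite a priori, which is legitimate here since the existence theorem already places $\Xe_n$ in $L^2(\Omega;C([0,T];\L))$.
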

We recall that in addition to the above $\L$-estimate, the solution of the regularized equation \eqref{reg.TVF} satisfies the following stronger a priori estimate, see \cite[Lemma 3.2]{our_paper}.
\begin{lems}\label{laplace_energy_estimate}
Let $x_0^n\, \in L^2(\Omega,\F_0;\Hz)$, $g_n \in \Hz$.
There exists a constant $C\equiv C(T)$
such that for any $\eps,\delta\,>0$ the corresponding variational solution $\Xdn$ of \eqref{reg.TVF}  satisfies
\begin{align}
\E{\sup_{t \in [0,T]} \nos{\nabla \Xdn(t)}  + \delta \int_0^T \nos{\Delta \Xdn(t)}\d t  } \leq C\left(\E{\nos{x_0^n}_{\Hz}}+\nos{g_n}_{\Hz}\right).
\end{align}
\end{lems}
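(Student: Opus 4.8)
The plan is to test \eqref{reg.TVF} with $-\Delta\Xdn$, i.e. to apply It\^o's formula to $t\mapsto\nos{\nabla\Xdn(t)}$. Since this is an $\Hz$-level estimate while Lemma~\ref{lem_eps_exist} only provides $\L$-valued paths with $\Hz$-regularity, I would carry out the computation on a spectral Galerkin approximation $\Xdn^N$ in the eigenbasis of $-\Delta$. This basis is the convenient one: the projection $P_N$ commutes with $-\Delta$, so testing the projected equation with $-\Delta\Xdn^N$ reproduces the continuous pairings, and its elements lie in $\mathbb{H}^2\cap\Hz$ on the convex domain $\O$. I would derive a bound uniform in $N$ and pass to the limit by weak lower semicontinuity of $\nos{\nabla\cdot}$ and of $\int_0^T\nos{\Delta\cdot}\d t$, together with Fatou's lemma for $\E{\sup}$. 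The dominant $\delta\Delta$ term is precisely what furnishes the $\mathbb{H}^2$-regularity giving meaning to $\Delta\Xdn$ in the limit; equivalently one may appeal to It\^o's formula in the triple $\mathbb{H}^2\cap\Hz\subset\Hz\subset(\mathbb{H}^2\cap\Hz)^{*}$.

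It\^o's formula applied to $\nos{\nabla\Xdn}$ produces, from the noise $\Xdn\d W$, the local martingale $2\int_0^t\nos{\nabla\Xdn}\d W$ and the quadratic-variation correction $\int_0^t\nos{\nabla\Xdn}\d s$, together with the drift $\ska{\nabla(-A^{\delta}\Xdn),\nabla\Xdn}$. Because $\Xdn\equiv0$ on $\partial\O$ for all $t$, the drift $-A^{\delta}\Xdn=\d\Xdn-\Xdn\d W$ has vanishing trace, so this pairing integrates by parts \emph{without} boundary terms to the $\L$ inner product $\ska{A^{\delta}\Xdn,\Delta\Xdn}$. Expanding $A^{\delta}$ according to \eqref{Operator}, the linear part gives exactly $-\delta\nos{\Delta\Xdn}$, the zero-order part equals $-\lambda\nos{\nabla\Xdn}+\lambda\ska{\nabla g_n,\nabla\Xdn}$ (again no boundary term, since $g_n\in\Hz$) and is bounded by $\tfrac{\lambda}{2}\nos{\nabla g_n}$ after Young's inequality, while the regularized total-variation part contributes $-\ska{\div(\fe{\Xdn}),\Delta\Xdn}$. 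After moving $\delta\nos{\Delta\Xdn}$ to the left I arrive at
\begin{align*}
\tfrac12\,\d\nos{\nabla\Xdn}+\delta\nos{\Delta\Xdn}\d t+\ska{\div(\fe{\Xdn}),\Delta\Xdn}\d t\le\Big(\tfrac{\lambda}{2}\nos{\nabla g_n}+\tfrac12\nos{\nabla\Xdn}\Big)\d t+\nos{\nabla\Xdn}\d W.
\end{align*}

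The main obstacle is the sign of the total-variation term, namely $\ska{\div(\fe{\Xdn}),\Delta\Xdn}\ge0$, which is what allows it to be discarded. Integrating by parts twice, this quantity equals $\sum_s\into(\partial_s\nabla\Xdn)^{\!\top}\,A_{\eps}(\nabla\Xdn)\,\partial_s\nabla\Xdn\,\d\xx$ plus boundary contributions, where $A_{\eps}$ is the positive semidefinite Hessian of $\sqrt{\bes{\cdot}+\eps^2}$ responsible for the monotonicity \eqref{eps.convexity.inequality}. The interior integral is nonnegative, and the boundary contributions are controlled by the convexity of $\O$; equivalently, this nonnegativity is the infinitesimal form of the gradient-contraction property of the resolvent of $-\div(\fe{\cdot})$ on convex domains (cf. \cite{Roeckner_TVF_paper}). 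Establishing this sign for $d>1$, i.e. handling the boundary terms through convexity, is the delicate step; for $d=1$ it is immediate.

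Finally, I would drop the nonpositive total-variation term, integrate in time, take the supremum over $[0,t]$, apply the Burkholder--Davis--Gundy inequality to the martingale and Young's inequality to absorb a factor $\tfrac12\E{\sup_{[0,t]}\nos{\nabla\Xdn}}$ into the left-hand side, take expectations, and close with Gronwall's lemma. Since $\no{\cdot}_{\Hz}\simeq\no{\nabla\cdot}$ by the Poincar\'e inequality, this yields the asserted bound. The two genuinely non-routine points are thus the rigorous justification of It\^o's formula at the $\mathbb{H}^2$-level and the sign of the boundary terms in dimension $d>1$; everything else is standard.
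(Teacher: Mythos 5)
Your proposal is correct and takes essentially the same route as the paper's proof, which is by reference to \cite[Lemma 3.2]{our_paper} and is mirrored in the time-discrete estimate \eqref{discrete_H1_estimate_viscTVF}: It\^o's formula for $\nos{\nabla\Xdn}$ justified by Galerkin approximation, the key sign property $\ska{\fe{\Xdn},\nabla(-\Delta\Xdn)}\geq 0$ on convex domains obtained from the gradient-contraction property of the resolvent (cf.~\eqref{discrete_resolvent_estimate}), then BDG and Gronwall. Only two harmless slips: the contraction invoked is that of the resolvent of the Dirichlet Laplacian, not of $-\div\big(\fe{\cdot}\big)$, and in your final paragraph ``nonpositive'' should read ``nonnegative'' (the term sits on the left-hand side of the inequality, which is exactly why it may be dropped).
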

{We consider the following functionals
\begin{align*}
\bar{ \mathcal{J}}_{\eps,\lambda}(u)=
\begin{cases}
\mathcal{J}_{\eps,\lambda}(u) + \int_{\partial \O} \be{\gamma_0(u)} \d \mathcal{H}^{n-1} \quad & \text{for}~ u \in BV(\O)\cap L^2(\O),\\
+\infty  & \text{for}~ u \in BV(\O)\setminus L^2(\O),
\end{cases}
\end{align*}
and (for $\eps=0$)
\begin{align*}
\bar{ \mathcal{J}}_\lambda (u)=
\begin{cases}
\mathcal{J}_\lambda (u) + \int_{\partial \O} \be{\gamma_0(u)} \d \mathcal{H}^{n-1} \quad &\text{for}~ u \in BV(\O)\cap L^2(\O),\\
+\infty  &\text{for}~ u \in BV(\O)\setminus L^2(\O),
\end{cases}
\end{align*}
where $\gamma_0(u) $ is the trace of $u$ on the boundary and $\d \mathcal{H}^{n-1}$
is the Hausdorff measure.
The functionals $\bar{\mathcal{J}}_{\eps,\lambda}$ and $\bar{\mathcal{J}}_{\lambda}$ are  both convex and lower semicontinuous on $\L$ and the lower semicontinuous hulls of $\bar{\mathcal{J}}_{\eps,\lambda}\vert_{\Hz}$ and $\bar{\mathcal{J}}_{\lambda}\vert_{\Hz}$, respectively, cf. \cite[Proposition 11.3.2]{book_attouch}.}

As in \cite{our_paper} we interpret (\ref{TVF}), (\ref{eps.TVF}) as stochastic variational inequalities.
\begin{defs}\label{def_svi}
Let $0  < T < \infty$, {$\eps \in [0,1]$} and {$x_0 \in L^2(\Omega,\F_0;\L)$ and $g \in \L$}.
Then an $\F_t$-{adapted} stochastic process {$\Xee \in L^2(\Omega; C([0,T];\L))\cap  L^1(\Omega; L^1((0,T);BV(\O)))$ 
(denoted by $X \in L^2(\Omega; C([0,T];\L))\cap  L^1(\Omega; L^1((0,T);BV(\O)))$ for $\eps=0$)}
is called a {SVI  solution} of (\ref{eps.TVF}) (or (\ref{TVF}) if $\eps=0$) if $\Xee(0)=x_0$ ($X(0)=x_0$), and
for each $(\F_t)$-progressively measurable process $G\in L^2(\Omega \times (0,T),\L)  $ and for each $(\F_t)$-adapted $\L$-valued process $Z$
with $\P$-a.s. continuous sample paths, s.t, $Z \in L^2(\Omega \times (0,T);\Hz)$, which satisfy the equation 
\begin{align}\label{test}
\d Z(t)= -G(t) \d t +Z(t)\d W(t), ~ t\in[0,T],
\end{align}
it holds for {$\eps \in (0,1]$} that
\begin{align}\label{reg.SVI}
\frac{1}{2}& \E{\nos{\Xee(t)-Z(t)}}+\E{\intt {\bar{\mathcal{J}}_{\eps,\lambda}}(\Xee(s)) \d s} \nonumber\\
&\leq  \frac{1}{2} \E{\nos{x_0-Z(0)}}+\E{\intt { \bar{\mathcal{J}}_{\eps,\lambda}}(Z(s)) \d s}  \\
&+ \frac{1}{2}\E{\intt \nos{\Xee(s)-Z(s)} \d s}
+\frac{1}{2}\E{\intt \ska{\Xee(s)-Z(s),G} \d s}\,,\nonumber
\end{align}
and analogously for $\eps=0$ it holds that
\begin{align}\label{SVIeps0}
\frac{1}{2}& \E{\nos{X(t)-Z(t)}}+\E{\intt { \bar{\mathcal{J}}_{\lambda}}(X(s)) \d s} \nonumber\\
&\leq  \frac{1}{2} \E{\nos{x_0-Z(0)}}+\E{\intt { \bar{\mathcal{J}}_{\lambda}}(Z(s)) \d s}  \\
&+ \frac{1}{2}\E{\intt \nos{X(s)-Z(s)} \d s}
+\frac{1}{2}\E{\intt \ska{X(s)-Z(s),G} \d s}\nonumber.
\end{align}
\end{defs}
The next theorem shows that the solutions of the regularized problem \eqref{reg.TVF} 
converge to the SVI solution of \eqref{TVF} for $\eps, n \rightarrow \infty, \delta \rightarrow 0$;
the proof of the theorem follows as \cite[Theorem 3.2]{our_paper}. 
\begin{thms}\label{Thm.SVI}
Let $0  < T < \infty$ and {$x_0 \in L^2(\Omega,\F_0;\L)$,   $ g \in \L$} be fixed and consider $\Hz$-approximating sequences 
$\{x^n_0\}_{n\in \mathbb{N}} \subset L^2(\Omega,\F_0;\Hz)$, $\{g^n\}_{n\in \mathbb{N}} \subset \Hz$,
s.t.  $x^n_0 \rightarrow x_0$, $g^n \rightarrow g$ in $L^2(\Omega,\F_0; \L)$ for $n\rightarrow \infty$.
Let $\{\Xe_n\}_{\delta>0}$ be the  variational solutions of \eqref{reg.TVF} associated with  $x_0^n,g^n$, $\eps\in(0,1]$ and $\delta >0$.
Then  $\Xe_n$ converges to the unique SVI variational solution $X$ of (\ref{TVF})
in $L^2(\Omega;C([0,T];\L))$ for $\eps\rightarrow 0, n \rightarrow \infty,\delta \rightarrow 0 $, i.e.,
\begin{align}\label{epsilon goes to 0}
 \lim_{\eps \rightarrow 0}  \lim_{n \rightarrow \infty}  \lim_{\delta \rightarrow 0}\E{\sup_{t \in [0,T]}\nos{\Xe_n(t)-X(t)}}=0.
\end{align}
\end{thms}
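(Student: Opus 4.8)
The plan is to obtain the triple limit (\ref{epsilon goes to 0}) as three successive \emph{strong} limits in $L^2(\Omega;C([0,T];\L))$, each coming from an $\L$-contraction estimate, and to identify each limit object by passing to the limit in a variational inequality of the form (\ref{reg.SVI}). Throughout, the comparison estimates are obtained by applying It\^o's formula to the squared $\L$-norm of a difference of two regularized solutions, taking expectations (so that the stochastic integral drops), and closing with Gronwall's lemma; the $\supt$ is recovered by the Burkholder--Davis--Gundy inequality applied to the martingale $\intt\nos{\cdot}\,\d W$, whose contribution is absorbed into the left-hand side.

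First I would record the variational inequality satisfied by $\Xdn$ itself. As $\Xdn$ is a genuine variational solution (Lemma~\ref{lem_eps_exist}), I apply It\^o to $\tfrac12\nos{\Xdn(t)-Z(t)}$ for an arbitrary test process $Z$ solving (\ref{test}); since the multiplicative noise is common to both equations, the difference again has diffusion $(\Xdn-Z)\,\d W$, and the It\^o correction produces precisely $\tfrac12\E\intt\nos{\Xdn-Z}$. For the drift I split $A^\delta$ according to (\ref{Operator}) into the $\delta$-Laplacian and the convex part, use the subgradient inequality behind (\ref{eps.convexity.inequality}), i.e.\ $\ska{\frac{\nabla \Xdn}{\sqrt{|\nabla \Xdn|^2+\eps^2}},\nabla(\Xdn-Z)}+\lambda(\Xdn-g_n,\Xdn-Z)\geq \mathcal{J}_{\eps,\lambda}(\Xdn)-\mathcal{J}_{\eps,\lambda}(Z)$, and estimate the Laplacian cross term by $\tfrac{\delta}{2}\E\intt\nos{\nabla Z}$ via Young's inequality. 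This gives (\ref{reg.SVI}) for $\Xdn$ up to the perturbation $\tfrac{\delta}{2}\E\intt\nos{\nabla Z}$, with $\bar{\mathcal{J}}_{\eps,\lambda}$ replaced by $\mathcal{J}_{\eps,\lambda}$ (the two agree on $\Hz$).

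Next come the three contraction estimates.
\begin{enumerate}
\item[(\emph{i})] \emph{Limit $\delta\to0$ (with $\eps,n$ fixed).} Comparing $X^{\delta_1}_n$ and $X^{\delta_2}_n$, the common convex part is monotone by (\ref{eps.convexity.inequality}), while the difference of the two Laplacian terms is bounded, after Young's inequality, by $|\delta_1-\delta_2|\,\E\int_0^T(\nos{\nabla X^{\delta_1}_n}+\nos{\nabla X^{\delta_2}_n})=O(|\delta_1-\delta_2|)$ thanks to Lemma~\ref{laplace_energy_estimate}. Hence $\{X^\delta_n\}_\delta$ is Cauchy with a limit $X^\eps_n$.
\item[(\emph{ii})] \emph{Limit $n\to\infty$ (with $\eps$ fixed).} Comparing solutions for data $(x^n_0,g_n)$ and $(x^m_0,g_m)$ at equal $\delta,\eps$, only the zero-order term differs, giving $\E\supt\nos{X^\delta_n-X^\delta_m}\leq C\big(\E\nos{x^n_0-x^m_0}+\nos{g_n-g_m}\big)$ uniformly in $\delta$; letting $\delta\to0$ and using $x^n_0\to x_0$, $g_n\to g$ in $\L$ shows $\{X^\eps_n\}_n$ is Cauchy with limit $X^\eps$.
\item[(\emph{iii})] \emph{Limit $\eps\to0$.} This is the delicate step: the fluxes $\nabla u/\sqrt{|\nabla u|^2+\eps^2}$ with different $\eps$ are \emph{not} monotone, and their pointwise difference does not vanish. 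Instead I use that the energies are uniformly close, $0\le \mathcal{J}_{\eps}(u)-\mathcal{J}_0(u)\le \eps|\O|$ for every $u$, where $\mathcal{J}_0(u)=\into|\nabla u|$. Adding the two subgradient inequalities for $\mathcal{J}_{\eps_1}$ and $\mathcal{J}_{\eps_2}$, the four $\mathcal{J}_0$-contributions cancel and one is left with the lower bound $-(\eps_1+\eps_2)|\O|$ for the mixed nonlinear term, \emph{independently of the gradients}. This yields $\E\supt\nos{X^{\eps_1}_n-X^{\eps_2}_n}\leq C(\eps_1+\eps_2)$ uniformly in $\delta,n$; passing $\delta\to0$, then $n\to\infty$, shows $\{X^\eps\}_\eps$ is Cauchy with limit $X$.
\end{enumerate}

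Finally I identify the limits by passing to the limit in the variational inequalities, where strong convergence makes the quadratic right-hand terms (and the left-hand norm) converge, while the functional terms are controlled by lower semicontinuity. Passing $\delta\to0$ in the perturbed inequality of the second paragraph (the $\delta$-term vanishes; the left-hand $\mathcal{J}_{\eps,\lambda}$-term is handled by lower semicontinuity of the relaxed functional $\bar{\mathcal{J}}_{\eps,\lambda}$) shows that $X^\eps_n$ is the (unique) SVI solution of (\ref{eps.TVF}) for data $(x^n_0,g_n)$; passing $n\to\infty$, now also letting $g_n\to g$ inside the functionals, gives the SVI solution $X^\eps$ of (\ref{eps.TVF}) for data $(x_0,g)$; and passing $\eps\to0$, using $\bar{\mathcal{J}}_{\eps,\lambda}\ge\bar{\mathcal{J}}_\lambda$ on the left and the uniform convergence $\bar{\mathcal{J}}_{\eps,\lambda}(Z)\to\bar{\mathcal{J}}_\lambda(Z)$ on the right, shows $X$ satisfies (\ref{SVIeps0}). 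By uniqueness of the SVI solution of (\ref{TVF}), the whole family converges, which together with the Cauchy estimates gives (\ref{epsilon goes to 0}). I expect step (\emph{iii}) to be the main obstacle, precisely because of the lack of monotonicity across regularization levels; the device of replacing a direct flux estimate by the uniform two-sided bound on $\mathcal{J}_\eps-\mathcal{J}_0$ is what makes the $\eps$-limit quantitative and uniform in $\delta$ and $n$.
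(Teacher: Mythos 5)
Your proposal is correct and follows essentially the same route as the paper's proof, which is deferred to \cite[Theorem 3.2]{our_paper}: successive $\L$-contraction estimates via It\^o's formula, monotonicity, Gronwall and Burkholder--Davis--Gundy (the $\delta$-limit controlled by the uniform $\Hz$-bound of Lemma~\ref{laplace_energy_estimate}, the $n$-limit by continuity in the data, and the $\eps$-limit by the uniform sandwich $0\le\mathcal{J}_{\eps}-\mathcal{J}_0\le\eps\be{\O}$), followed by identification of the limit through passage to the limit in the perturbed variational inequality using lower semicontinuity of $\bar{\mathcal{J}}_{\eps,\lambda}$, $\bar{\mathcal{J}}_{\lambda}$ and uniqueness of the SVI solution. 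In particular, your step (\emph{iii}) is exactly the device used there to overcome the lack of monotonicity across regularization levels, so the proposal matches the intended argument.
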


\section{Semi-discretization in time}\label{sec_semi_num}
For $N \in \N$ we consider a partition of the time interval $t_i=i\tau$  for $i=0,\ldots,N$
with the time-step $\tau = T/N$, and denote the discrete Wiener increments as $\Delta_i W= W(t_i)-W(t_{i-1})$.

The implicit time-discrete approximation of (\ref{reg.TVF}) is defined as follows:
set $X_{\delta,n}^0=x_0^n$ and determine $\Xii\in \Hz$, $i=1,\dots, N$ as the solution of
\begin{align}\label{semi_eps_TVF}
\ska{\Xii, \Phi}&=\ska{\Xmi,\Phi}  -\tau \delta\ska{\nabla \Xii, \nabla \Phi} -\tau \ska{\fe{\Xii},\nabla\Phi }   \\ \nonumber
&\quad-\tau\lambda\ska{\Xii -g_n,\Phi}+\ska{\Xmi,\Phi}\Delta_i W ~~~\qquad \forall \Phi \in \Hz.
\end{align}
The existence, uniqueness and measurability of $\{\Xii\}_{i=1}^N$ can be shown via finite dimensional Galerkin approximation; we summarize the main steps below: 
\begin{itemize}
\item consider a finite dimensional subspace $\mathbb{V}_m$ and the corresponding Galerkin approximation $X_{\delta,n,m}^{i}\in\mathbb{V}_m$ of the solution $\Xii$ of \eqref{semi_eps_TVF};
\item proceed by induction: assuming that an $\mathcal{F}_{t_{i-1}}$-measurable solution $X_{\delta,n,m}^{i-1} \in \mathbb{V}_m$ exists,
the existence of an $\mathcal{F}_{t_{i}}$-measurable solution $X_{\delta,n,m}^{i}$ follows by Brouwer's fixed point theorem
and the uniqueness by the monotonicity property (\ref{eps.convexity.inequality}), cf. \cite[Lemma 4.3]{our_paper};
\item for any $m\in \mathbb{N}$ the Galerkin approximation $\{X_{\delta,n,m}^{i}\}_{i=1}^n$  satisfies the same a priori estimates as in Lemma~\ref{Lemma_Discrete a priori estimates} below;
\item by the (uniform in $m$) a priori estimates it holds that $X_{\delta,n,m}^{i}\rightharpoonup X_{\delta,n}^{i}$ for $m\rightarrow \infty$. Furthermore,
by the monotonicity (\ref{eps.convexity.inequality}) it follows that the limit $X_{\delta,n}^{i}$ is unique and satisfies (\ref{semi_eps_TVF}), cf., Lemma~\ref{lemma_Limiten_Gleichung} below.
\end{itemize}

In the next lemma we state the stability properties of the time-discrete solution of the scheme (\ref{semi_eps_TVF})
which are discrete analogues of estimates in Lemma~\ref{lem_eps_exist}~and Lemma~\ref{laplace_energy_estimate}.
Later on, we will consider sequences $\{x_0^n\}_{n\in\mathbb{N}}$, $\{g_n\}_{n\in\mathbb{N}}$
which are uniformly bounded in $\L$ but not in $\Hz$. Hence, in the following we suppress the dependence of the constants on the data in
(\ref{discrete_energy_estimate_viscTVF}) but not in (\ref{discrete_H1_estimate_viscTVF}).
\begin{lems}\label{Lemma_Discrete a priori estimates}
 Let $x_0^n \in L^2(\Omega,\F_0;\Hz)$ and $g_n\in\Hz$ be given.
Then there exists a constant $C \equiv C(\E{\|x_0^n\|_{\L}}, \|g_n\|_{\L}) > 0$ such that for any $\tau>0$
the solution of scheme (\ref{semi_eps_TVF}) satisfies
\begin{align}\label{discrete_energy_estimate_viscTVF}
\max_{i=1,\ldots,N}\E{\nos{\Xii}}+&\frac{1}{4}\E{\sum_{k=1}^N\nos{X_{\delta,n}^{k}-X_{\delta,n}^{k-1}}} 
\nonumber\\
 +&\tau \E{\sum_{k=1}^N \mathcal{J}_{\eps}(X_{\delta,n}^{k})} +\frac{\tau \lambda}{2}\E{\sum_{k=1}^N \nos{X_{\delta,n}^{k}}}\leq  C\,,
\end{align}
and a constant $C_{n} \equiv C( \mathbb{E}[\|x_0^n\|_{\Hz}], \|g_n\|_{\Hz}) > 0$ such that for any $\tau>0$
\begin{align}\label{discrete_H1_estimate_viscTVF}
\max_{i=1,\ldots,N}\E{\nos{\nabla \Xii}}+ \E{\sum_{k=1}^N\nos{\nabla(X_{\delta,n}^{k}-X_{\delta,n}^{k-1})}} +  \tau \delta \E{\sum_{k=1}^N\nos{\Delta X_{\delta,n}^{k}}} \leq  C_{n}.
\end{align}
\end{lems}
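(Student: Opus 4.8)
The two estimates (\ref{discrete_energy_estimate_viscTVF}) and (\ref{discrete_H1_estimate_viscTVF}) are discrete versions of the continuous a priori bounds in Lemma~\ref{lem_eps_exist} and Lemma~\ref{laplace_energy_estimate}, so the plan is to mimic the continuous energy arguments at the discrete level, using the scheme (\ref{semi_eps_TVF}) tested against appropriate discrete multipliers. For the $\L$-estimate (\ref{discrete_energy_estimate_viscTVF}), I would choose $\Phi = \Xii$ in (\ref{semi_eps_TVF}). The key algebraic tool is the polarization identity $(a-b,a) = \tfrac12(\no{a}^2 - \no{b}^2 + \no{a-b}^2)$, applied to $(\Xii - \Xmi, \Xii)$, which produces the telescoping term $\tfrac12(\nos{\Xii} - \nos{\Xmi})$ together with the dissipation term $\tfrac12\nos{\Xii - \Xmi}$ that accounts for the second sum on the left-hand side. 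The viscous term $\tau\delta\nos{\nabla\Xii}$ and the convex term $\tau(\fe{\Xii},\nabla\Xii)$ are nonnegative (the latter bounded below using the structure of $\Jeps$), and the $\lambda$-term yields $\tau\lambda\nos{\Xii}$ up to a lower-order contribution from $g_n$.

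The main obstacle is the stochastic term $(\Xmi,\Xii)\Delta_i W$, which must be handled carefully because $\Xii$ is $\F_{t_i}$-measurable while $\Delta_i W$ is not $\F_{t_{i-1}}$-measurable, so the product does not vanish in expectation directly. The standard remedy is to rewrite $(\Xmi,\Xii)\Delta_i W = (\Xmi, \Xii - \Xmi)\Delta_i W + (\Xmi,\Xmi)\Delta_i W$; the second term vanishes in expectation by $\F_{t_{i-1}}$-measurability and $\E{\Delta_i W}=0$, while the first is controlled by Young's inequality, splitting off a piece that is absorbed into the dissipation $\tfrac14\nos{\Xii-\Xmi}$ and a piece of the form $C\tau\nos{\Xmi}$ using $\E{|\Delta_i W|^2}=\tau$. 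After summing over $i=1,\dots,k$, taking expectations, and applying the discrete Gronwall lemma to the resulting $\sum \tau\E{\nos{X^{j}_{\delta,n}}}$ term, one obtains (\ref{discrete_energy_estimate_viscTVF}); to reach the $\max_i$ and the supremum-type control one passes to the maximum over the partial sums and invokes the Burkholder--Davis--Gundy inequality on the martingale part of the telescoped stochastic sum.

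For the stronger estimate (\ref{discrete_H1_estimate_viscTVF}), I would instead test (\ref{semi_eps_TVF}) with the discrete Laplacian, i.e.\ choose $\Phi = -\Delta \Xii$ (equivalently integrate by parts so as to bring in $\nos{\nabla\Xii}$ and $\nos{\Delta\Xii}$). Here the viscous term $\delta(\nabla\Xii,\nabla\Phi)$ yields $\tau\delta\nos{\Delta\Xii}$, the polarization identity now produces the telescoping $\nabla$-gradient terms and the dissipation $\nos{\nabla(\Xii-\Xmi)}$, and the curvature term $(\fe{\Xii},\nabla\Phi)$ must be shown to have a favorable sign using the convexity/monotonicity of $\sqrt{\bes{\cdot}+\eps^2}$, exactly as in (\ref{eps.convexity.inequality}) adapted to the second-order test function; this is where the convexity of $\O$ is used to justify the integration by parts with the boundary term having the correct sign. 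The stochastic term is treated analogously, now controlling $(\nabla\Xmi,\nabla\Xii)\Delta_i W$ and absorbing part into $\nos{\nabla(\Xii-\Xmi)}$; the resulting constant $C_n$ depends on the $\Hz$-norm of the data, reflecting that this estimate is not uniform in $n$. The proof of (\ref{discrete_H1_estimate_viscTVF}) is the more delicate of the two, with the sign of the regularized curvature term under the Laplacian test function being the crucial point.
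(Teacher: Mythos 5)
Your proposal follows essentially the same route as the paper's proof: for \eqref{discrete_energy_estimate_viscTVF} one tests \eqref{semi_eps_TVF} with $\Phi=\Xii$, uses the polarization identity, splits the stochastic term as $\ska{\Xmi,\Xii-\Xmi}\Delta_i W+\nos{\Xmi}\Delta_i W$ (the second part vanishing in expectation by measurability, the first absorbed via Young's inequality into the dissipation term), uses the convexity of $\mathcal{J}_\eps$ to retain the $\tau\sum_k\mathcal{J}_\eps(X_{\delta,n}^k)$ term, and concludes with the discrete Gronwall lemma; for \eqref{discrete_H1_estimate_viscTVF} one tests with $\Phi=-\Delta\Xii$ and the crucial point is, exactly as you identify, the sign property $\ska{\fe{\Xii},\nabla(-\Delta\Xii)}\geq 0$, which rests on the convexity of the domain and is precisely \eqref{discrete_resolvent_estimate} in the paper. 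The only superfluous element is your appeal to the Burkholder--Davis--Gundy inequality: since \eqref{discrete_energy_estimate_viscTVF} bounds $\max_{i}\E{\nos{\Xii}}$ (a maximum of expectations, not an expectation of a maximum), the discrete Gronwall lemma already yields the claim and no maximal inequality is needed.
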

\begin{proof} 
We set $\Phi =\Xii$ \eqref{semi_eps_TVF} and use the identity
$2(a-b)a =a^2 - b^2 + (a-b)^2$
to get for  $i=1,\ldots,N$
\begin{align}\label{num.visc.energie.estimate}
&\frac{1}{2}\nos{\Xii} +\frac{1}{2}\nos{\Xii-\Xmi}+\tau \delta \nos{\nabla \Xii}+\tau\ska{\fe{\Xii},\nabla \Xii} \nonumber\\
&=\frac{1}{2}\nos{\Xmi}-\tau \lambda\left(\nos{\Xii}-\ska{ g^{n},\Xii}\right) +\ska{\Xmi,\Xii}\Delta_i W. 
\end{align}
We take expectation in (\ref{num.visc.energie.estimate})
and use the properties of the Wiener increments $\E{\Delta_i W}=0$, $\E{ \bes{\Delta_i W}}=\tau$ and the independence of $\Delta_i W$ and $\Xmi$
to estimate the stochastic term as
\begin{align*}
\E{\ska{\Xmi,\Xii}\Delta_i W}&=\E{\ska{\Xmi,\Xii-\Xmi}\Delta_i W}+\E{\ska{\Xmi,\Xmi}\Delta_i W}
\\
\leq& \E{\frac{1}{4}\nos{\Xmi-\Xii}+\nos{\Xmi}\bes{\Delta_i W}} + \E{\|\Xmi\|^2}\E{\Delta_i W}
\\
=&\frac{1}{4}\E{\nos{\Xii-\Xmi}}+\tau\E{\nos{\Xmi}}.
\end{align*}
From (\ref{num.visc.energie.estimate}) by the convexity of $\mathcal{J}_\eps$ and using $\mathcal{J}_\eps(0)=\eps \be{\O}$ it follows that
\begin{align*}
\frac{1}{2}\E{\nos{\Xii}} +&\frac{1}{4}\E{\nos{\Xii-\Xmi}}+\frac{\tau\lambda}{2}\E{\nos{\Xii}}+ \tau \delta \nos{\nabla \Xii} +\tau \E{\mathcal{J}_\eps(\Xii)}\\
\leq& \tau \eps \be{\O}+ \frac{1}{2}\E{\nos{\Xmi}}+\tau\E{\nos{\Xmi}}+\tau\lambda{\nos{g^{n}}}\,. \nonumber
\end{align*}
We sum up the above inequality for $k=1,\ldots,i$ and obtain
\begin{align}\label{num.visc.energie.estimate3}
\frac{1}{2}\E{\nos{\Xii}} +&\frac{1}{4}\E{\sum_{k=1}^i\nos{X_{\delta,n}^{k}-X_{\delta,n}^{k-1}}}+\frac{\tau \lambda}{2}\E{\sum_{k=1}^i \nos{ X_{\delta,n}^{k}}}\nonumber\\+&{\tau \delta\E{\sum_{k=1}^i \no{\nabla X_{\delta,n}^{k}}}}+\tau \E{\sum_{k=1}^i\mathcal{J}_\eps (X_{\delta,n}^{k})}
\\
\nonumber  
\leq& T \eps \be{\O}+\frac{1}{2}\E{\nos{x^n_0}}+T\lambda{\nos{  g^{n}}}+\tau\E{\sum_{k=1}^{i}\nos{X_{\delta,n}^{k-1}}} \,.
\end{align}
Then (\ref{discrete_energy_estimate_viscTVF}) follows from (\ref{num.visc.energie.estimate3})
after an application of the discrete Gronwall lemma.

To show the estimate (\ref{discrete_H1_estimate_viscTVF}) we proceed formally, the calculations can be made rigorous via finite dimensional Galerkin approximation, cf. \cite[Lemma~3.2]{our_paper}.
We set $\Phi = -\Delta \Xii$ in (\ref{semi_eps_TVF}), use integration by parts
and proceed analogously to the first part of the proof.

{As in the proof of \cite[Lemma~3.2]{our_paper} we deduce that}
\begin{align}\label{discrete_resolvent_estimate}
 \ska{\fe{\Xii},\nabla (- \Delta \Xii)} \geq 0.
\end{align}
Hence, we neglect the above term and conclude that
\begin{align*}
\frac{1}{2}& \E{\nos{\nabla \Xii}} +\frac{1}{4}\E{\sum_{k=1}^i\nos{\nabla(X_{\eps,n}^{k}-X_{\eps,n}^{k-1})}}+\frac{\tau \lambda}{2}\E{\sum_{k=1}^i \nos{ \nabla X_{\eps,n}^{k}}}
\\
&+ \tau \delta\E{\sum_{k=1}^i \no{\Delta X_{\delta,n}^{k}}} \leq  \frac{1}{2}\E{\nos{\nabla x^n_0}}+T\lambda{\nos{\nabla   g_n}}+\tau\E{\sum_{k=1}^{i}\nos{\nabla X_{\eps,n}^{k-1}}}\,.
\end{align*}
Estimate (\ref{discrete_H1_estimate_viscTVF}) then follows after an application of the discrete Gronwall lemma.
\end{proof}

\begin{bems}
The proof of the convergence of the numerical approximation given in \cite{our_paper} relies on the stronger a priori estimate
\eqref{discrete_H1_estimate_viscTVF}.
The above proof of the estimate \eqref{discrete_H1_estimate_viscTVF} requires property \eqref{discrete_resolvent_estimate} to hold.
So far, the proof of the spatially discrete counterpart of the estimate \eqref{discrete_resolvent_estimate} is restricted to 
spatial dimension $d = 1$ \cite[Lemma 3.1]{stvf_erratum}.
In the proof of the convergence of the fully discrete numerical approximation below 
we circumvent the lack of a (rigorous) discrete counterpart of \eqref{discrete_resolvent_estimate} for $d>1$ by considering the time-discrete problem (\ref{semi_eps_TVF}) as an intermediate step.
\end{bems}

We define piecewise constant time-interpolants of the numerical solution
$\{\Xii\}_{i=0}^{N}$ of (\ref{semi_eps_TVF}) for $t\in [0,T]$ as
\begin{align}\label{eps_delta_interpol1}
\Yc(t)= \Xii \quad  \mathrm{if}\quad t\in (t_{i-1},t_i]
\end{align}
and
\begin{align}\label{eps_delta_interpol2}
\Ycm(t)= \Xmi\quad \mathrm{if}\quad t\in [t_{i-1},t_i)\,.
\end{align}
We note that (\ref{semi_eps_TVF}) can be reformulated as
\begin{align}\label{Integralformulation}
 &\ska{\Yc(t),\Phi}+\left\langle\int_0^{\theta_{+}(t)} A^{\delta}\Yc(s) \d s,\Phi\right\rangle \nonumber \\
 &=\ska{X_{\eps,n}^0,\Phi}+\ska{\int_0^{\theta_+(t)} \Ycm(s) \d W(s),\Phi} \qquad \mathrm{for}\,\, t\in [0,T],\,\, \Phi \in \Hz,
\end{align}
where $\theta_+(0)=0$ and $\theta_+(t)=t_i$ if  $t\in (t_{i-1},t_{i}]$.

Estimates \eqref{discrete_energy_estimate_viscTVF}, \eqref{discrete_H1_estimate_viscTVF} imply the bounds
\begin{align}\label{Constant interpolation estimate}
\sup_{t\in [0,T]}\E{\nos{\Yc(t)}} &\leq C, &\sup_{t\in [0,T]}\E{\nos{\Ycm(t)}} \leq C,\\
~  \delta \E{\int_0^T \nos{\nabla\Yc(s)}\d s} &\leq C.\nonumber
\end{align}
Furthermore, \eqref{Constant interpolation estimate} and \eqref{a_bnd} imply
\begin{align}\label{Aed Abschaetzung}
\E{\int_0^T \nos{A^{\delta} \Yc(s)}_{\Hm} \d s} \leq C.
\end{align}
The estimates in (\ref{Constant interpolation estimate}) for fixed  $n \in \N$, $\eps,\delta >0$
imply the existence of a subsequence, still denoted by $\{\Yc\}_{\tau>0}$,
and a $Y \in L^2(\Omega\times (0,T);\L)\cap L^2(\Omega\times (0,T);\Hz)\cap L^{\infty}((0,T);L^2(\Omega;\L)$, s.t., for $\tau \rightarrow 0$
\begin{align}\label{limit_process}
\Yc &\weak Y ~\text{in}~ L^2(\Omega\times (0,T);\L), \nonumber\\
\Yc &\weak Y ~\text{in}~ L^2(\Omega\times (0,T);\Hz),\\
\Yc &\weak^* Y ~\text{in}~ L^{\infty}((0,T);L^2(\Omega;\L)) \nonumber.
\end{align}
In addition, there exists $\nu \in L^2(\Omega;\L)$ such that $\Yc(T) \rightharpoonup \nu$ 
in $L^2(\Omega;\L)$ as $\tau \rightarrow 0$ and estimate (\ref{Aed Abschaetzung}) 
implies the existence of $a^{\delta} \in L^2(\Omega\times (0,T);\Hm)$, s.t.,
\begin{align}\label{lim_a}
A^{\delta} \Yc &\weak a^{\delta} ~\text{in}~ L^2(\Omega\times (0,T);\Hm)\quad \mathrm{for} \,\,\tau\rightarrow 0.
\end{align} 
Furthermore, the estimates in (\ref{Constant interpolation estimate}) for fixed  $n \in \N$, $\eps,\delta > 0$ imply
the existence of a subsequence, still denoted by $\{\Ycm\}_{\tau>0}$,
and of $Y^- \in L^2(\Omega\times (0,T);\L)$, s.t.,
\begin{align*}
\Ycm &\weak Y^- ~\text{in}~ L^2(\Omega\times (0,T);\L)\quad \mathrm{for} \,\,\tau\rightarrow 0.
\end{align*}
Finally, inequality \eqref{num.visc.energie.estimate3} implies
\begin{align*}
\lim_{\tau \rightarrow 0}\E{\int_0^T \nos{\Yc(s)-\Ycm(s)} \d s }=&\lim_{\tau \rightarrow 0}\tau \E{\sum_{k=1}^N\nos{X_{\delta,n}^k-X_{\delta,n}^{k-1}}} \nonumber\\
\leq& \lim_{\tau \rightarrow 0} C\tau =0\,.  
\end{align*}
which shows that the weak limits of $Y$ and $Y^-$ coincide. 

From the above convergence properties we deduce by standard arguments, cf. \cite[Lemma 4.6]{our_paper},
that the solutions of the semi-discrete scheme (\ref{semi_eps_TVF}) converge to the unique variational solution of (\ref{reg.TVF}) for $\tau\rightarrow 0$.
\begin{lems}\label{lemma_Limiten_Gleichung}
Let $x_0^n \in L^2(\Omega,\F_0;\Hz)$ and $g_n\in\Hz$ be given, let $\eps,\delta,\lambda>0$, {$n \in \N$} be fixed. 
Further, let $\Xdn$ be the unique variational solution of \eqref{reg.TVF}
and $\Yc$, $\Ycm$ be the respective time-interpolant (\ref{eps_delta_interpol1}), (\ref{eps_delta_interpol2}) of the numerical solution $\{\Xii\}_{i=1}^N$ of \eqref{semi_eps_TVF}.
Then $\Yc$, $\Ycm$ converge to $\Xdn$ for $\tau \rightarrow 0$ in the sense that
the weak limits from (\ref{limit_process}), (\ref{lim_a}) satisfy $Y\equiv \Xdn$, $a^{\delta}\equiv A^{\delta} Y \equiv A^{\delta} \Xdn$ and $\nu=Y(T)\equiv \Xdn(T)$.
In addition, it holds for almost all $(\omega,t) \in \Omega\times (0,T)$ that
\begin{align*}
Y(t)=Y(0){-}\intt A^{\delta} Y(s) \d s+\intt Y(s)\d W(s),
\end{align*}
and there is an $\L$-valued continuous modification of $Y$ (denoted again by $Y$) such that for all $t \in [0,T]$ 
\begin{align}\label{Ito-Formule_fuer_Limiten}
\frac{1}{2}\nos{Y(t)}= & \frac{1}{2}\nos{Y(0)}{-}\intt \langle A^{\delta} Y(s),Y(s) \rangle +\frac{1}{2}\nos{Y(s)} \d s
\\ \nonumber 
& +\intt (Y(s),Y(s))\d W(s).
\end{align}
\end{lems}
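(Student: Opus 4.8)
The plan is to let $\tau\to0$ in the integral formulation \eqref{Integralformulation} and then to identify the weak limit $a^{\delta}$ of $A^{\delta}\Yc$ with $A^{\delta}Y$ by a monotonicity (Minty) argument. Fixing $\eps,\delta,n$, I would use the convergences \eqref{limit_process}, \eqref{lim_a}, the fact that $\Yc(T)\weak\nu$, and the already noted coincidence of the weak limits of $\Yc$ and $\Ycm$, to pass to the limit termwise in \eqref{Integralformulation}: the term $\ska{\Yc(t),\Phi}$ converges by \eqref{limit_process}; the drift $\langle\int_0^{\theta_+(t)}A^{\delta}\Yc\,\d s,\Phi\rangle$ converges to $\langle\int_0^t a^{\delta}\,\d s,\Phi\rangle$ using \eqref{lim_a} together with $\theta_+(t)\to t$; and, by the weak continuity of the stochastic integral as a bounded linear map on $L^2(\Omega\times(0,T);\L)$, the noise term converges to $\ska{\int_0^t Y\,\d W(s),\Phi}$ because $\Ycm\weak Y$. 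This yields, for a.e.\ $(\omega,t)$, the identity $Y(t)=Y(0)-\intt a^{\delta}(s)\,\d s+\intt Y(s)\,\d W(s)$ with $Y(0)=x_0^n$, and the whole task reduces to showing $a^{\delta}=A^{\delta}Y$.

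For the identification I would invoke the monotonicity \eqref{Monotonicity}: for every $\phi\in L^2(\Omega\times(0,T);\Hz)$ one has $\E{\int_0^T\langle A^{\delta}\Yc-A^{\delta}\phi,\Yc-\phi\rangle\,\d s}\ge0$. Expanding and letting $\tau\to0$ via \eqref{limit_process}, \eqref{lim_a} turns this into $\E{\int_0^T\langle a^{\delta}-A^{\delta}\phi,Y-\phi\rangle\,\d s}\ge0$, provided one can establish the crucial energy inequality
\[
\limsup_{\tau\to0}\E{\int_0^T\langle A^{\delta}\Yc(s),\Yc(s)\rangle\,\d s}\le\E{\int_0^T\langle a^{\delta}(s),Y(s)\rangle\,\d s}.
\]
Granting this, the choice $\phi=Y-\rho v$ for $v\in L^2(\Omega\times(0,T);\Hz)$ and $\rho>0$, division by $\rho$, the demicontinuity of $A^{\delta}$, and $\rho\downarrow0$ give $\E{\int_0^T\langle a^{\delta}-A^{\delta}Y,v\rangle}=0$ for all $v$, hence $a^{\delta}=A^{\delta}Y$.

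The main obstacle is precisely this energy $\limsup$, since the linear multiplicative noise contributes a quadratic-variation term that weak lower semicontinuity controls in the wrong direction. I would compare two energy balances. On the discrete level, testing \eqref{semi_eps_TVF} with $\Phi=\Xii$ and using $2(a-b)a=a^2-b^2+(a-b)^2$ yields an exact identity for $\E{\int_0^T\langle A^{\delta}\Yc,\Yc\rangle\,\d s}$ in terms of $\tfrac12\E{\nos{x_0^n}}$, the terminal energy $\tfrac12\E{\nos{\Yc(T)}}$, the discrete quadratic variation $\tfrac12\E{\sum_k\nos{X_{\delta,n}^k-X_{\delta,n}^{k-1}}}$, and the martingale sum $\E{\sum_k\ska{X_{\delta,n}^{k-1},X_{\delta,n}^k}\Delta_k W}$; the latter two jointly reproduce, up to vanishing remainders, the It\^o correction $\tfrac12\E{\int_0^T\nos{Y}\,\d s}$ of \eqref{Ito-Formule_fuer_Limiten}. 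To force the $\limsup$ in the right direction I would carry out the comparison under the exponential weight $e^{-s}$ (discretely, $(1+\tau)^{-k}$): because the noise coefficient is the identity, the weight's derivative exactly cancels the $\tfrac12\nos{Y}$ quadratic-variation term in both the continuous It\^o balance for $Y$ and the discrete balance, leaving only the terminal-energy term with a negative sign, to which the weak lower semicontinuity of $u\mapsto\E{\nos{u}}$ applied to $\Yc(T)\weak\nu$ can be applied. Matching the weighted discrete balance against the weighted continuous It\^o formula then yields the stated inequality.

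Finally, with $a^{\delta}=A^{\delta}Y$ identified, $Y$ is a variational solution of \eqref{reg.TVF}; the uniqueness in Lemma~\ref{lem_eps_exist} forces $Y\equiv\Xdn$, whence $a^{\delta}\equiv A^{\delta}Y\equiv A^{\delta}\Xdn$ and $\nu=Y(T)\equiv\Xdn(T)$, and uniqueness of the limit upgrades the subsequential convergence to convergence of the whole family. The $\L$-valued continuous modification satisfying \eqref{Ito-Formule_fuer_Limiten} then follows from the It\^o formula for the square of the $\L$-norm on the Gelfand triple $\Hz\hookrightarrow\L\hookrightarrow\Hm$, applicable since $Y\in L^2(\Omega\times(0,T);\Hz)$ and $A^{\delta}Y\in L^2(\Omega\times(0,T);\Hm)$ by \eqref{Aed Abschaetzung}.
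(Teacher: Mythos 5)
Your proposal is correct and is essentially the paper's own argument: the paper proves this lemma by appealing to the ``standard arguments'' of \cite[Lemma 4.6]{our_paper}, which are precisely what you spell out --- limit passage in \eqref{Integralformulation}, Minty-type identification of $a^{\delta}$ with $A^{\delta}Y$, and the exponential weight $e^{-s}$ (discretely $(1+\tau)^{-k}$) that cancels the It\^o correction of the linear multiplicative noise so that weak lower semicontinuity is only needed on the terminal term $\Yc(T)\weak\nu=Y(T)$. The concluding identification $Y\equiv\Xdn$ by uniqueness and the It\^o formula on the Gelfand triple $\Hz\hookrightarrow\L\hookrightarrow\Hm$ are likewise the standard conclusion, so no gap remains.
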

{The strong monotonicity property \eqref{Monotonicity} of the operator $A^{\delta}$ implies strong convergence of the time-discrete
approximation in  $L^2(\Omega\times(0,T);\L)$, cf. \cite[Lemma 4.7]{our_paper}.}
\begin{lems}\label{Lemma_Convergence_num.vis.Scheme}
Let $x_0^n \in L^2(\Omega,\F_0;\Hz)$ and $g_n\in\Hz$ be given,
let $\eps,\delta, \lambda>0$, {$n \in \N$} be fixed. Furthermore, let $\Xdn$ be the variational solution of \eqref{reg.TVF} 
and $\Yc$ be the time-interpolants (\ref{eps_delta_interpol1}) of the time-discrete solution $\{\Xii\}_{i=1}^N$ of \eqref{semi_eps_TVF}.
Then
\begin{align}
\lim_{\tau \rightarrow 0}\nos{\Xe_n-\Yc}_{L^2(\Omega \times (0,T);\L)}\rightarrow 0.
\end{align}
\end{lems}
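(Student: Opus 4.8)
The plan is to exploit the strong monotonicity \eqref{Monotonicity} of $A^{\delta}$ to reduce the claim to the convergence of a single energy quantity, and then to identify the limit of that quantity by matching the discrete energy balance of the scheme \eqref{semi_eps_TVF} against the continuous Itô formula \eqref{Ito-Formule_fuer_Limiten}. Since $\lambda>0$, the monotonicity \eqref{Monotonicity} yields
\[
\lambda\,\E{\int_0^T\nos{\Yc(s)-\Xe_n(s)}\d s}\le \E{\int_0^T\langle A^{\delta}\Yc(s)-A^{\delta}\Xe_n(s),\Yc(s)-\Xe_n(s)\rangle\d s}=:I_\tau,
\]
so, as $I_\tau\ge0$, it suffices to prove $\limsup_{\tau\to0}I_\tau\le0$. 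Expanding $I_\tau$ into four duality pairings, three of them converge by the weak limits from Lemma~\ref{lemma_Limiten_Gleichung}: using $A^{\delta}\Yc\weak A^{\delta}\Xe_n$ in $L^2(\Omega\times(0,T);\Hm)$ paired with the fixed $\Xe_n\in L^2(\Omega\times(0,T);\Hz)$, and $\Yc\weak\Xe_n$ in $L^2(\Omega\times(0,T);\Hz)$ paired with the fixed $A^{\delta}\Xe_n\in L^2(\Omega\times(0,T);\Hm)$, the three mixed terms tend to $-\E{\int_0^T\langle A^{\delta}\Xe_n,\Xe_n\rangle\d s}$. Thus the task reduces to establishing
\[
\limsup_{\tau\to0}\,\E{\int_0^T\langle A^{\delta}\Yc(s),\Yc(s)\rangle\d s}\le \E{\int_0^T\langle A^{\delta}\Xe_n(s),\Xe_n(s)\rangle\d s}.
\]

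For the left-hand side I would use the discrete energy balance: setting $\Phi=\Xii$ in \eqref{semi_eps_TVF}, summing the resulting pathwise identity \eqref{num.visc.energie.estimate} over $i$ and taking expectations, one expresses $\E{\int_0^T\langle A^{\delta}\Yc,\Yc\rangle}$ through $\tfrac12\E{\nos{x_0^n}}$, the terminal energy $\tfrac12\E{\nos{\Yc(T)}}$, the jump term $\tfrac12\E{\sum_k\nos{\Xii-\Xmi}}$, and the stochastic term $\E{\sum_k\ska{\Xmi,\Xii}\Delta_i W}$. The latter is rewritten using $\E{\Delta_i W}=0$, $\E{\bes{\Delta_i W}}=\tau$ and the $\F_{t_{i-1}}$-measurability of $\Xmi$; together with the jump term it should produce, in the limit, exactly the Itô correction $\tfrac12\E{\int_0^T\nos{\Xe_n}}$ appearing in \eqref{Ito-Formule_fuer_Limiten}, the residual contributions being of order $\tau$. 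Comparing with \eqref{Ito-Formule_fuer_Limiten} for $Y\equiv\Xe_n$ and invoking weak lower semicontinuity of the norm for $\Yc(T)\weak\Xe_n(T)$ in $L^2(\Omega;\L)$ is then expected to close the $\limsup$ inequality, and hence $\E{\int_0^T\nos{\Yc-\Xe_n}}\to0$.

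I expect the reconciliation of the noise-induced terms to be the main obstacle. The jump term $\E{\sum_k\nos{\Xii-\Xmi}}$ does \emph{not} vanish — only its $\tau$-weighted version $\E{\int_0^T\nos{\Yc-\Ycm}}=\tau\E{\sum_k\nos{\Xii-\Xmi}}\to0$ does; it converges to the quadratic variation $\E{\int_0^T\nos{\Xe_n}}$ of the multiplicative noise, and must be combined with the stochastic term so that precisely the right Itô factor survives. Since the terminal energy $\E{\nos{\Yc(T)}}$ can be bounded from below but not from above by weak lower semicontinuity, the inequality only closes if the terminal and quadratic-variation terms are kept together, i.e. if one uses the \emph{joint} monotonicity of the drift–diffusion pair, which is available here because \eqref{Monotonicity} supplies $\lambda\nos{u-v}$ while the linear noise contributes exactly $\nos{u-v}$. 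A technically cleaner fallback, should the direct energy comparison stall, is to apply Itô's formula to $\nos{\hat{Y}(t)-\Xe_n(t)}$ for the continuous interpolant $\hat{Y}(t)=x_0^n-\int_0^t A^{\delta}\Yc\d s+\int_0^t\Ycm\d W$ of the scheme, to invoke the joint monotonicity pointwise, and to absorb the discretisation mismatches between $\hat{Y}$ and $\Yc$ in the drift and between $\Ycm$ and $\Yc$ in the noise — all small in $L^2(\Omega\times(0,T);\L)$ as $\tau\to0$, the latter by $\E{\int_0^T\nos{\Yc-\Ycm}}\to0$ shown above — before closing with Gronwall's lemma.
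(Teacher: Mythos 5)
Your overall strategy is the same one the paper relies on: the paper proves this lemma precisely via the strong monotonicity \eqref{Monotonicity}, deferring the details to \cite[Lemma 4.7]{our_paper}, i.e.\ reduce to showing $\limsup_{\tau\to0}\E{\int_0^T\langle A^{\delta}\Yc,\Yc\rangle\d s}\le\E{\int_0^T\langle A^{\delta}\Xdn,\Xdn\rangle\d s}$ by means of the weak convergences and the identification $a^{\delta}=A^{\delta}\Xdn$ from Lemma~\ref{lemma_Limiten_Gleichung}, and then compare the discrete energy balance with the It\^o formula \eqref{Ito-Formule_fuer_Limiten}. Up to that reduction your proposal is correct. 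The gap is exactly in the step you flag as the main obstacle, and your proposed resolution does not work. Testing \eqref{semi_eps_TVF} with $\Xii$, taking expectations and balancing the stochastic increment against the jump term (by Young's inequality) yields
\begin{align*}
\frac12\E{\nos{\Yc(T)}}+\E{\int_0^T\langle A^{\delta}\Yc(s),\Yc(s)\rangle\d s}
\le\frac12\E{\nos{x_0^n}}+\frac12\E{\int_0^T\nos{\Ycm(s)}\d s}\,,
\end{align*}
so closing the $\limsup$ inequality against \eqref{Ito-Formule_fuer_Limiten} requires $\limsup_{\tau\to0}\E{\int_0^T\nos{\Ycm}\d s}\le\E{\int_0^T\nos{\Xdn}\d s}$. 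Weak convergence $\Ycm\weak\Xdn$ gives only the reverse (lower-semicontinuity) inequality; in fact, given the weak convergence, convergence of these norms is \emph{equivalent} to the strong convergence being proved, so your claim that the jump/stochastic terms converge to the quadratic variation $\E{\int_0^T\nos{\Xdn}\d s}$ is circular. Your repair — ``joint monotonicity of the drift--diffusion pair'' — fails in the assumed parameter range: \eqref{Monotonicity} gives $-2\langle A^{\delta}u-A^{\delta}v,u-v\rangle+\nos{u-v}\le(1-2\lambda)\nos{u-v}-2\delta\nos{u-v}_{\Hz}$, which has a favourable sign only when $2\lambda$ (plus the Poincar\'e contribution of $2\delta$) exceeds $1$, whereas the lemma assumes merely $\lambda,\delta>0$, and $\delta\to0$ in the subsequent application.

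The missing idea is a Gronwall-type exponential weight, which is the standard device of the variational framework (cf.\ \cite[Chapter 4]{Roeckner_book}) for absorbing the It\^o correction of the linear noise. Multiply the $i$-th pathwise identity by $e^{-t_i}$ before summing; Abel summation creates the additional term $\frac12(1-e^{-\tau})\sum_{i=0}^{N-1}e^{-t_i}\E{\nos{X_{\delta,n}^{i}}}$ on the left, which dominates the noise contribution $\frac{\tau}{2}e^{-\tau}\sum_{i=0}^{N-1}e^{-t_i}\E{\nos{X_{\delta,n}^{i}}}$ because $\tau\le e^{\tau}-1$, so that
\begin{align*}
\frac12 e^{-T}\E{\nos{\Yc(T)}}+\E{\int_0^T e^{-\theta_+(s)}\langle A^{\delta}\Yc(s),\Yc(s)\rangle\d s}\le\frac12\E{\nos{x_0^n}}\,,
\end{align*}
with the problematic term eliminated; since the noise is linear, the weighted It\^o formula gives exactly $\frac12 e^{-T}\E{\nos{\Xdn(T)}}+\E{\int_0^T e^{-s}\langle A^{\delta}\Xdn,\Xdn\rangle\d s}=\frac12\E{\nos{x_0^n}}$, and weak lower semicontinuity of the terminal term ($\Yc(T)\weak\Xdn(T)$) now closes the $\limsup$ inequality; replacing $e^{-\theta_+(s)}$ by $e^{-s}$ costs $O(\tau)$ by \eqref{Aed Abschaetzung} and \eqref{Constant interpolation estimate}, and your monotonicity expansion then goes through verbatim with the weight $e^{-s}\ge e^{-T}$. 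Finally, your fallback contains the right mechanism (Gronwall) but an unjustified step: It\^o's formula cannot be applied to $\nos{\hat Y(t)-\Xdn(t)}$ for the continuous interpolant $\hat Y$, because between grid points $\hat Y(t)$ is not $\Hz$-valued (its drift $A^{\delta}\Yc$ is only $\Hm$-valued), so the cross pairing $\langle A^{\delta}\Yc-A^{\delta}\Xdn,\hat Y-\Xdn\rangle$ is not even defined; one must instead use the weighted energy comparison above, or a discrete product rule for $\E{\ska{\Xii,\Xdn(t_i)}}$.
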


\section{Full Discretization}\label{sec_full_num}
Given a quasi-uniform triangulation $\mathcal{T}_h$ of $\O$ we consider the $\mathbb{H}^1_0$-conforming finite element space of globally continuous piecewise linear functions over $\mathcal{T}_h$ given as
\begin{align*}
\mathbb{V}_h=\left\{w_h \in C^0(\O) : w_h \vert_{T} \in \mathcal{P}_1(T) ~\forall T \in \mathcal{T}_h\right\} \subset \Hz.
\end{align*}
The orthogonal $\L$-projection $\Pi_h : \mathbb{H}^1 \rightarrow \mathbb{V}_h$ is defined as
$$
(v-\Pi_h v, w_h) = 0\qquad \forall w_h \in \mathbb{V}_h\,.
$$
It is well-known, see e.g., \cite{BrennerS02}, \cite{Prohl_TVF_numerics}, that the projection operator satisfies the following interpolation and stability properties for $\psi \in \mathbb{H}^1$:
\begin{align}\label{Interpolation_properties_1}
\no{\psi-\Pi_h \psi}\leq Ch\no{\nabla \psi}~~~\text{and}~~~\|\Pi_h \psi\|_{\mathbb{H}^1}\leq C \|\psi\|_{\mathbb{H}^1}\,. 
\end{align}
For $\psi \in \mathbb{H}^2$ one has the following estimate
\begin{align}\label{Interpolation_properties_2}
\no{\psi-\Pi_h \psi}+h\no{\nabla [\psi-\Pi_h \psi]}\leq Ch^2\no{\nabla^2 \psi}. 
\end{align}

By the estimate \eqref{discrete_H1_estimate_viscTVF} we deduce from \eqref{Interpolation_properties_2} that
\begin{align}\label{Interpolation_convergence}
\tau \sum_{i=1}^N\E{\nos{\nabla\big(X_{\delta,n}^{i}-\Pi_h X_{\delta,n}^{i}\big)}} \leq C_n\delta^{-1}h^{2}\,,
\end{align}
uniformly for all $\tau>0$. 

Given $\mathbb{H}^1$-regular data $x^n_0$, $g_{n}$ we consider the following auxiliary fully discrete numerical scheme.
Set $X^0_{\eps.n,h}=\Pi_h x^n_0$, $g_{n,h}=\Pi_h g_n$, $\tau=T/N$ and
determine $\Xi \in \mathbb{V}_h$, $i=1,\dots, N$  as the solution of
\begin{align}\label{full_eps_TVF}
\ska{\Xi,\vh}&=\ska{\Xmin,\vh}-\tau \ska{\fe{\Xi},\nabla \vh } \\
&-\tau\lambda\ska{\Xi -g_{n,h},\vh}+\ska{\Xmin,\vh}\Delta_i W &&\forall \vh \in \mathbb{V}_h \ \nonumber.
\end{align}

The existence, uniqueness and measurability of the numerical solution $\{\Xi\}_{i=1}^N$ follows as in \cite[Lemma 5.3]{our_paper}.

In the next lemma we state the stability properties of the auxiliary numerical scheme (\ref{full_eps_TVF}).
The proof of the estimate is a direct counterpart of the proof of \eqref{discrete_energy_estimate_viscTVF} and is therefore omitted. 
\begin{lems}\label{eps.num.energy.estimates}
Let $x^n_0, g_n\in\Hz$ and $T> 0$.
Then there exists a constant $C\equiv C(T)$ such that 
the solutions of scheme \eqref{full_eps_TVF} satisfy for any $\eps,h\in (0,1]$, $N\in \mathbb{N}$ 
\begin{align}\label{disc_ener}
\max_{i=1,\ldots,N}\E{\nos{\Xi}}+&\frac{1}{4}\E{\sum_{k=1}^N\nos{X_{\eps,n,h}^{k}-X_{\eps,n,h}^{k-1}}} 
\nonumber\\
 +&\tau \E{\sum_{k=1}^N \mathcal{J}_{\eps}(X^{k}_{\eps,n,h})} +\frac{\tau \lambda}{2}\E{\sum_{k=1}^N \nos{X_{\eps,n,h}^{k}}}\leq  C\,.
\end{align}
\end{lems}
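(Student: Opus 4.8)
The plan is to mirror the proof of the energy estimate \eqref{discrete_energy_estimate_viscTVF}, observing that the fully discrete scheme \eqref{full_eps_TVF} differs from the time-semidiscrete scheme \eqref{semi_eps_TVF} only through the absence of the viscous term $\tau\delta(\nabla\Xii,\nabla\Phi)$ and through the replacement of the test space $\Hz$ by its finite element subspace $\mathbb{V}_h$. Since the discrete solution $\Xi$ itself lies in $\mathbb{V}_h$, it is an admissible test function, so all of the algebraic manipulations of the earlier proof carry over essentially verbatim.

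First I would set $\vh=\Xi$ in \eqref{full_eps_TVF} and apply the identity $2(a-b)a=a^2-b^2+(a-b)^2$ to $\ska{\Xi-\Xmin,\Xi}$, obtaining the discrete energy identity
\begin{align*}
&\frac{1}{2}\nos{\Xi}+\frac{1}{2}\nos{\Xi-\Xmin}+\tau\ska{\fe{\Xi},\nabla\Xi}\\
&\quad=\frac{1}{2}\nos{\Xmin}-\tau\lambda\bigl(\nos{\Xi}-\ska{g_{n,h},\Xi}\bigr)+\ska{\Xmin,\Xi}\,\Delta_i W,
\end{align*}
which is the exact analogue of \eqref{num.visc.energie.estimate} with the $\delta$-term removed. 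Next I would take expectation and treat the stochastic term precisely as before: using $\E{\Delta_i W}=0$, $\E{\bes{\Delta_i W}}=\tau$ and the independence of the $\F_{t_{i-1}}$-measurable $\Xmin$ from $\Delta_i W$, the splitting $\ska{\Xmin,\Xi}\Delta_i W=\ska{\Xmin,\Xi-\Xmin}\Delta_i W+\ska{\Xmin,\Xmin}\Delta_i W$ combined with Young's inequality produces the bound $\frac{1}{4}\E{\nos{\Xi-\Xmin}}+\tau\E{\nos{\Xmin}}$, the first summand being absorbed into the left-hand side.

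To dispose of the gradient term I would invoke the convexity of $\Jeps$, i.e. the subgradient inequality underlying \eqref{eps.convexity.inequality} tested against $0$, in the form $\ska{\fe{\Xi},\nabla\Xi}\geq\Jeps(\Xi)-\Jeps(0)=\Jeps(\Xi)-\eps\be{\O}$, and bound the reaction term by $\tau\lambda\ska{g_{n,h},\Xi}\leq\frac{\tau\lambda}{2}\nos{\Xi}+\frac{\tau\lambda}{2}\nos{g_{n,h}}$, retaining $\frac{\tau\lambda}{2}\nos{\Xi}$ on the left. Summing the resulting inequality over $k=1,\dots,i$ yields the discrete counterpart of \eqref{num.visc.energie.estimate3}, and a final application of the discrete Gronwall lemma gives \eqref{disc_ener}.

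There is in fact no genuine obstacle: the estimate \eqref{disc_ener} controls only $\L$-norms and the functional $\Jeps$, neither of which relied on the viscous term in the semidiscrete proof, so dropping $\tau\delta(\nabla\Xii,\nabla\Phi)$ costs nothing. The only point worth verifying is that the $\L$-stability of the orthogonal projection, $\nos{\Pi_h x_0^n}\leq\nos{x_0^n}$ and $\nos{g_{n,h}}\leq\nos{g_n}$, keeps the data contributions on the right-hand side bounded independently of $h$; this ensures the constant $C$ depends only on $T$ and the fixed $\L$-norms of the data, but not on $h$, $\eps$ or $N$, as claimed. Note that, just as for \eqref{discrete_energy_estimate_viscTVF}, only the $\L$-bounds of the data enter, so the $\mathbb{H}^1$-regularity assumed in the statement is not actually exploited by this particular estimate.
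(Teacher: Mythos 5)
Your proposal is correct and is exactly the argument the paper intends: the paper omits the proof of Lemma~\ref{eps.num.energy.estimates}, stating it is a direct counterpart of the proof of \eqref{discrete_energy_estimate_viscTVF}, and your derivation (testing with $\Xi$, the identity $2(a-b)a=a^2-b^2+(a-b)^2$, the independence/Young treatment of the stochastic term, convexity of $\Jeps$ with $\Jeps(0)=\eps\be{\O}$, and discrete Gronwall) reproduces that proof with the viscous term simply absent. Your closing observations --- that $\L$-stability of $\Pi_h$ keeps the data contributions $h$-uniform and that only $\L$-norms of the data enter --- are also accurate.
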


The next lemma provides an estimate for the difference between 
the solutions of the auxiliary fully discrete numerical scheme \eqref{full_eps_TVF} and the solutions of its semi-discrete counterpart \eqref{semi_eps_TVF}.
\begin{lems}\label{Differenece_time_space}
Let $\eps > 0$, $\delta>0$, $n\in\mathbb{N}$ be fixed. Let $\Xii$ be the solution of the semi-discrete scheme \eqref{semi_eps_TVF} and let $\Xi$ be the numerical solution of the fully-discrete scheme \eqref{full_eps_TVF}. 
Then the following estimate holds for $0<\tau \leq \frac{1}{2}$:
\begin{align*}
\max_{i=1,\ldots,N} \E{ \nos{\Xii-\Xi}} 
& \leq C\left(C_n h+  C_n^{1/2}\delta^{-\frac{1}{2}}h+ C_n \delta + \lambda\nos{g_n-g_{n,h}}\right).
\end{align*}
\end{lems}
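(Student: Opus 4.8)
The plan is to estimate the error $e^i := \Xii - \Xi$ by comparing the two schemes against a common test space. The semi-discrete scheme \eqref{semi_eps_TVF} is tested with arbitrary $\Phi \in \Hz$, whereas the fully discrete scheme \eqref{full_eps_TVF} is tested only with $\vh \in \mathbb{V}_h$ and, crucially, carries \emph{no} viscous term $\tau\delta(\nabla\cdot,\nabla\cdot)$. To reconcile them, I would first choose $\Phi = \vh = \Pi_h e^i = \Pi_h\Xii - \Xi$ as the test function in both equations and subtract. Writing $e^i = (\Xii - \Pi_h \Xii) + \Pi_h e^i =: \rho^i + \Pi_h e^i$ isolates the interpolation remainder $\rho^i$, which is controlled by \eqref{Interpolation_properties_2} and its summed consequence \eqref{Interpolation_convergence}. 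Using the algebraic identity $2(a-b,a) = \|a\|^2 - \|b\|^2 + \|a-b\|^2$ on the $\Pi_h e^i$ terms produces a telescoping structure in $\E{\nos{\Pi_h e^i}}$ analogous to \eqref{num.visc.energie.estimate}.

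\textbf{Handling the individual terms.}
The nonlinear term yields a difference $\tau\ska{\fe{\Xii}-\fe{\Xi},\nabla\Pi_h e^i}$; after inserting $\nabla\Pi_h e^i = \nabla e^i - \nabla\rho^i$, the genuinely nonlinear part is nonnegative by the monotonicity \eqref{eps.convexity.inequality} and can be dropped, while the cross term involving $\nabla\rho^i$ is bounded using $\be{\fe{\cdot}}\leq 1$ pointwise, so it contributes $C\tau\no{\nabla\rho^i}$. The $\lambda$-reaction term splits into a $\lambda\nos{\Pi_h e^i}$ piece and the data mismatch $\lambda\nos{g_n - g_{n,h}}$. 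The key discrepancy between the schemes is the viscous term $\tau\delta\ska{\nabla\Xii,\nabla\Pi_h e^i}$ present only in \eqref{semi_eps_TVF}; I would estimate it by $\tau\delta\no{\nabla\Xii}\no{\nabla\Pi_h e^i}$ and absorb, or more sharply treat it via $\ska{\nabla\Xii,\nabla\Pi_h e^i}$ together with the bound $\tau\sum_k\E{\nos{\nabla\Xii}}\leq C_n\delta^{-1}$ from \eqref{discrete_H1_estimate_viscTVF}; this is the source of the $C_n\delta$ term in the final bound. The stochastic terms $\ska{\Xmi,\cdot}\Delta_iW$ and $\ska{\Xmin,\cdot}\Delta_iW$ combine into $\ska{e^{i-1},\Pi_h e^i}\Delta_iW$ plus an interpolation remainder; taking expectations, using $\E{\Delta_iW}=0$, $\E{\bes{\Delta_iW}}=\tau$, independence of $\Delta_iW$ from $\F_{t_{i-1}}$, and Young's inequality reproduces a $\tfrac14\E{\nos{\Pi_h e^i - \Pi_h e^{i-1}}}$ term that is absorbed by the telescoping quadratic-variation term, leaving $\tau\E{\nos{e^{i-1}}}$.

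\textbf{Summation and Gronwall.}
After taking expectations and summing over $i=1,\dots,m$, the telescoping gives $\E{\nos{\Pi_h e^m}}$ on the left plus the accumulated quadratic-variation and viscous terms, against the accumulated interpolation remainders $\tau\sum_k\E{\no{\nabla\rho^k}}$, the data term $\lambda\nos{g_n-g_{n,h}}$, and $\tau\sum_k\E{\nos{e^{k-1}}}$ on the right. I would bound the remainders via $\tau\sum_k\E{\no{\nabla\rho^k}} \leq (\tau\sum_k\E{\nos{\nabla\rho^k}})^{1/2}T^{1/2} \leq C_n^{1/2}\delta^{-1/2}h$ from \eqref{Interpolation_convergence}, and $\tau\sum_k\E{\nos{\rho^k}}\leq C_n\delta^{-1}h^2$, which after rebalancing contributes the $C_n h$ and $C_n^{1/2}\delta^{-1/2}h$ terms. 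Finally, since $\nos{e^m}\leq 2\nos{\Pi_h e^m} + 2\nos{\rho^m}$, I recover the bound for the full error, and the $\tau\sum_k\E{\nos{e^{k-1}}}$ term is removed by the discrete Gronwall lemma (valid for $\tau\leq\frac12$).

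\textbf{Main obstacle.}
The principal difficulty is the viscous-term mismatch: the semi-discrete scheme contains $\tau\delta(\nabla\Xii,\nabla\cdot)$ with no analogue in the fully discrete scheme, and the available bound on $\nabla\Xii$ degenerates like $\delta^{-1}$. Keeping the $\delta$-dependence of each term explicit and consistent with the stated right-hand side, so that the final bound genuinely tends to zero in the intended scaling regime $h \ll \delta$, is the delicate bookkeeping step; a naive estimate would lose the correct power of $\delta$ and produce a weaker result than claimed.
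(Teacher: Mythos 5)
Your skeleton coincides with the paper's proof: test both schemes with $\Pi_h Z^i$, $Z^i:=\Xii-\Xi$, use the projection identity to obtain a telescoping structure in $\E{\nos{\Pi_h Z^i}}$, drop the monotone part of the nonlinearity via \eqref{eps.convexity.inequality}, bound the remainder $\ska{\fe{\Xii}-\fe{\Xi},\nabla(\Xii-\Pi_h\Xii)}$ using $\be{\fe{\cdot}}\leq 1$ and \eqref{Interpolation_convergence} (which yields the $C_n^{1/2}\delta^{-1/2}h$ term), handle the $\lambda$-term and the stochastic term as you describe, apply the discrete Gronwall lemma, and pass from $\Pi_h Z^i$ to $Z^i$ at the end to pick up the $C_n h$ term. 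All of that is correct and matches the paper. The genuine gap is exactly at the point you yourself flag as the ``main obstacle'': the viscous term, and neither of your two proposed treatments closes it. Estimating $\tau\delta\ska{\nabla\Xii,\nabla\Pi_h Z^i}$ by Cauchy--Schwarz leaves the factor $\no{\nabla\Pi_h Z^i}$, which contains $\no{\nabla\Xi}$; but there is no a priori $\Hz$-bound on the fully discrete solution for $d>1$ --- the failure of a discrete analogue of \eqref{discrete_resolvent_estimate} is precisely the obstruction this whole section is built to circumvent --- and there is no positive gradient term on the left-hand side of the error inequality to absorb it into, since \eqref{full_eps_TVF} carries no viscous term. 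Falling back on an inverse inequality $\no{\nabla\Xi}\leq Ch^{-1}\no{\Xi}$ would create a spurious $\delta h^{-1}$ contribution absent from the claimed bound. Your second suggestion fares no better: the bound you quote, $\tau\sum_k\E{\nos{\nabla X_{\delta,n}^{k}}}\leq C_n\delta^{-1}$, is not what \eqref{discrete_H1_estimate_viscTVF} provides (it gives $\max_i\E{\nos{\nabla \Xii}}\leq C_n$ uniformly in $\delta$; the $\delta^{-1}$ lives in \eqref{Interpolation_convergence}), and in any case a bound on $\nabla\Xii$ alone still leaves $\nabla\Pi_h Z^i$ uncontrolled.

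The missing idea is to integrate by parts so that all derivatives land on the semi-discrete solution: $\delta\ska{\nabla\Xii,\nabla\Pi_h Z^i}=-\delta\ska{\Delta\Xii,\Pi_h Z^i}$, which is legitimate because $\Delta\Xii\in\L$ almost surely by \eqref{discrete_H1_estimate_viscTVF}. Young's inequality then gives
\begin{align*}
\tau\delta\ska{-\Delta\Xii,\Pi_h Z^i}\leq \frac{\tau\delta^2}{2}\nos{\Delta\Xii}+\frac{\tau}{2}\nos{\Pi_h Z^i}\,,
\end{align*}
where the second term is removed by Gronwall, while after summation and taking expectations the first is bounded by
\begin{align*}
\frac{\delta}{2}\,\tau\delta\,\E{\sum_{k=1}^{N}\nos{\Delta X_{\delta,n}^{k}}}\leq C_n\delta\,,
\end{align*}
using exactly the Laplacian part of the a priori estimate \eqref{discrete_H1_estimate_viscTVF}. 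This is the only source of the $C_n\delta$ term in the statement, and it is the reason the semi-discrete scheme \eqref{semi_eps_TVF} --- for which \eqref{discrete_resolvent_estimate} is available --- serves as the intermediate object between \eqref{reg.TVF} and \eqref{full_eps_TVF}. Without this step your argument, as written, does not produce the claimed estimate.
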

\begin{proof}
We set $Z^i=\Xii-\Xi$ and observe the following equality 
\begin{align}\label{id0}
\ska{Z^i-Z^{i-1},\Pi_h Z^i}=\ska{\Pi_h(Z^i-Z^{i-1}),\Pi_h Z^i }
=\frac{1}{2}\nos{\Pi_h Z^i}+\frac{1}{2}\nos{\Pi_h (Z^i-Z^{i-1})}-\frac{1}{2}\nos{\Pi_h Z^{i-1}}\,,
\end{align}
where we used the elementary property of the orthogonal projection that $\ska{v,\Pi_h v} = \ska{\Pi_h v, \Pi_h v}$.

We set $\Phi = \vh= \Pi_h(\Xii-\Xi)$ in \eqref{full_eps_TVF}, \eqref{semi_eps_TVF} (note $\Pi_h\Xi =\Xi$) and obtain after subtracting the respective equations and using (\ref{id0})
\begin{align}\label{l2_ineq}
\frac{1}{2}\nos{\Pi_h Z^i}&+\frac{1}{2}\nos{\Pi_h (Z^i-Z^{i-1})}-\frac{1}{2}\nos{\Pi_h Z^{i-1}}
\nonumber \\
&+\tau\ska{\fe{\Xii}-\fe{\Xi},\nabla (\Xii-\Xi)}
\nonumber\\
&-\tau\ska{\fe{\Xii}-\fe{\Xi},\nabla (\Xii-\Pi_h\Xii)}
 \\
&+\tau \lambda\left( \ska{\Xii-g_n,\Pi_h Z^i}-\ska{\Xi-g_{n,h},\Pi_h Z^i}\right)
\nonumber \\
\nonumber
=& \ska{Z^{i-1},\Pi_h Z^i }\Delta_iW  + \tau \delta\ska{\Delta \Xii, Z^i }. 
\end{align}
By \eqref{eps.convexity.inequality} the fourth term on the left hand side is positive and can be neglected.
We estimate the fifth term on the left hand side in \eqref{l2_ineq} using $\be{\fe{\cdot}} \leq 1$ and the Cauchy-Schwarz inequality as
 \begin{align}\label{non_linear_ineq}
 & \ska{\fe{\Xii}-\fe{\Xi},\nabla (\Xii-\Pi_h\Xii)} 
\\ \nonumber 
 & \qquad \leq  2\be{\O}^{\frac{1}{2}}\no{\nabla (\Xii-\Pi_h\Xii)}
\,.
\end{align}  
Using the Cauchy-Schwarz and Young inequalities the last term on the left-hand in (\ref{l2_ineq}) can be estimated as
\begin{align*}
\ska{\Xii-g_n,\Pi_h Z^i} -\ska{\Xi-g_{n,h},\Pi_h Z^i}
&=\ska{g_{n,h}-g_n,\Pi_h Z^i} +\ska{\Xii- \Xi,\Pi_h Z^i}
\\
&\geq 
\frac{1}{2}\nos{\Pi_h Z^i}-\frac{1}{2}\nos{g_n-g_{n,h}}\,,
\end{align*}
and the last term on the right-hand side as
$$
 \delta\ska{-\Delta X_{\delta,n}^{i}, \Pi_h Z^i}
\leq \frac{\delta^2}{2} \|\Delta X_{\delta,n}^{i}\|^2 + \frac{1}{2}\nos{\Pi_h Z^i} \,.
$$
After substituting the above inequalities into \eqref{l2_ineq} we obtain
\begin{align}\label{l2_ineq_2}
\frac{1}{2}\nos{\Pi_h Z^i}+&\frac{1}{2}\nos{\Pi_h (Z^i-Z^{i-1})}-\frac{1}{2}\nos{\Pi_h Z^{i-1}}
\nonumber \\
\leq&  \ska{Z^{i-1} ,\Pi_h Z^i}\Delta_i W + \tau \be{\O}^{\frac{1}{2}}\no{\nabla (\Xii-\Pi_h\Xii)}
 \\ \nonumber
&+\frac{\lambda\tau}{2}\nos{g_n-g_{n,h}}
+  \frac{\tau \delta^2}{2} \|\Delta X_{\delta,n}^{i}\|^2 + \frac{\tau}{2}\nos{\Pi_h Z^i}.
\end{align}
We estimate the stochastic term as
\begin{align*}
\E{\ska{Z^{i-1}, \Pi_h Z^i}\Delta_i W}&=\E{\ska{Z^{i-1},\Pi_h Z^{i}-\Pi_h Z^{i-1}}\Delta_i W+\ska{Z^{i-1},\Pi_h Z^{i-1}}\Delta_i W}
\\
&=\E{ \ska{\Pi_h Z^{i-1},\Pi_h Z^{i}-\Pi_h Z^{i-1}}\Delta_i W+\ska{\Pi_h Z^{i-1},\Pi_h Z^{i-1}}\Delta_i W}
\\
& \leq\E{ \frac{1}{2}\nos{\Pi_h Z^{i-1}}\bes{\Delta_i W} + \frac{1}{2}\nos{\Pi_h (Z^i-Z^{i-1})}}
\\
& = \frac{\tau}{2}\E{ \nos{\Pi_h Z^{i-1}}} + \frac{1}{2}\E{\nos{\Pi_h (Z^i-Z^{i-1})}}.
\end{align*}
Hence, we obtain after taking expectation in \eqref{l2_ineq_2} and summing over $i$ that
\begin{align}\label{estimate_1}
\frac{1}{2}\E{\nos{\Pi_h Z^i}} &\leq \frac{1}{2}\E{\nos{\Pi_h Z^{0}}}
+\frac{\tau}{2}\E{\sum_{k=1}^i \no{\Pi_h Z^{k-1}}^2}
\nonumber\\
 &\quad + 
2\tau \be{\O}^{\frac{1}{2}}  \E{\sum_{k=1}^i \no{\nabla[X_{\delta,n}^{k}-\Pi_h X_{\delta,n}^{k}]} }+
\frac{\tau}{2}\E{\sum_{k=1}^i \nos{\Pi_h Z^k}}
\\
&\quad +\frac{T\lambda}{2}\nos{g_n-g_{n,h}}  + \frac{\tau \delta^2}{2}  \E{\sum_{k=1}^i \|\Delta X_{\delta,n}^{i}\|^2}
\nonumber
\\
\nonumber
&= \mathrm{I+II+III+IV+V+VI}.
\end{align}
By the Cauchy-Schwartz inequality and \eqref{Interpolation_convergence} we obtain
\begin{align}\label{eq_deltaint}
\mathrm{III}
&\leq (T\be{\O})^{\frac{1}{2}}\left(\sum_{k=0}^i \tau\E{\nos{\nabla[X_{\delta,n}^{k}-\Pi_h X_{\delta,n}^{k}]}}\right)^{\frac{1}{2}}
\leq C \left(C_n \delta^{-1}h^2\right)^{\frac{1}{2}}\,.
\end{align}
Estimate \eqref{discrete_H1_estimate_viscTVF} implies
$$
\mathrm{VI} \leq C_n\delta,
$$
and since $X^0_{\eps,n,h}=\Pi_h x^n_0$ and $X^0_{\delta,n}= x^n_0$, we deduce
$$
I = \frac{1}{2}\| \Pi_h X^0_{\delta,n} - X^0_{\eps,n,h} \|^2 = 0\,.
$$
After substituting the above estimates for $I$, $III$, $VI$ into \eqref{estimate_1}, we obtain by the discrete Gronwall lemma for sufficiently small $\tau$ (e.g. $\tau \leq \frac{1}{2}$) that
 \begin{align*}
\max_{i=1,\dots,N}\E{\nos{\Pi_h Z^i}}
&\leq 
  C\left(\lambda \no{g_n-g_{n,h}}
+  C_n^{1/2} \delta^{-\frac{1}{2}}h+  C_n\delta\right).
\end{align*}
The statement of the lemma then follows from the above estimate by (\ref{Interpolation_properties_1}) and (\ref{discrete_H1_estimate_viscTVF}), since
\begin{align*}
\E{\nos{Z^i}} &\leq 2\E{\nos{Z^i-\Pi_h Z^i}}+2\E{\nos{\Pi_h Z^i}} 
\\
& =2\E{\nos{\Xii-\Pi_h \Xii}}+2\E{\nos{\Pi_h Z^i}}
\\
&\leq C_n h+2\E{\nos{\Pi_h Z^i}}.
\end{align*}
\end{proof}

The fully discrete numerical approximation of (\ref{eps.TVF}) is constructed as follows.
For $x_0,g \in \mathbb{L}^2$ we set $X^{0}_{\eps,h}=\Pi_h x_0$ and $g_{h}=\Pi_h g$ and determine $\xii$, $i=1,\dots,N$ as the solution of:
\begin{align}\label{full_eps_TVF_L2_data}
\ska{\xii,\vh}=&\ska{\xmi,\vh}-\tau \ska{\fe{\xii},\nabla \vh } \\
&-\tau\lambda\ska{\xii -g_{h},\vh}+\ska{\xmi,\vh}\Delta_i W &&\forall \vh \in \mathbb{V}_h \ \nonumber.
\end{align}
The existence, uniqueness and measurability properties of the solutions of (\ref{full_eps_TVF_L2_data}) follow analogously as for the solutions of (\ref{full_eps_TVF}).

{In the next lemma we estimate the difference between the solutions of the fully discrete numerical scheme (\ref{full_eps_TVF_L2_data}) and the auxiliary scheme (\ref{full_eps_TVF}). }
\begin{lems}\label{Lemma_Difference-num.Schemes}
Let $x_0 \in L^2(\Omega,\F_0;\L)$ and $g\in\L$ be given.
Then for each $n \in \mathbb{N}$ there exists a constant $C\equiv C(T)>0$, 
such that for any $N\in\N$, $n\in \mathbb{N}$, $h,\eps \in (0,1]$ the following estimate holds for the difference of the numerical solutions of (\ref{full_eps_TVF}) and (\ref{full_eps_TVF_L2_data}):
\begin{align*}
\max_{i=1,\ldots,N}\E{\nos{\Xi-\xii}} \leq C\left(\E{\nos{x_0-x_0^{n}}}+ \lambda\nos{g-g_{n}}\right). 
\end{align*} 
\end{lems}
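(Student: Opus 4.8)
The plan is to exploit the fact that the two schemes (\ref{full_eps_TVF}) and (\ref{full_eps_TVF_L2_data}) are posed on the \emph{same} finite element space $\mathbb{V}_h$, with the same nonlinearity $\fe{\cdot}$ and the same driving increments $\Delta_i W$, so that their difference is itself an admissible test function and no projection error enters the nonlinear term. Writing $E^i := \Xi - \xii \in \mathbb{V}_h$, I would subtract (\ref{full_eps_TVF_L2_data}) from (\ref{full_eps_TVF}), choose the test function $\vh = E^i$, and apply the polarization identity $2(a-b)a = a^2 - b^2 + (a-b)^2$ in the $\L$-inner product to the term $\ska{E^i - E^{i-1}, E^i}$. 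Since the forcing difference is $g_{n,h} - g_h = \Pi_h(g_n - g)$, this yields
\begin{align*}
\tfrac12\nos{E^i} &+ \tfrac12\nos{E^i - E^{i-1}} - \tfrac12\nos{E^{i-1}} + \tau\ska{\fe{\Xi}-\fe{\xii},\nabla E^i} + \tau\lambda\nos{E^i} \\
&= \tau\lambda\ska{g_{n,h}-g_h,\, E^i} + \ska{E^{i-1},E^i}\Delta_i W .
\end{align*}

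The decisive observation is that the nonlinear term is nonnegative: since $\nabla E^i = \nabla(\Xi - \xii)$, the monotonicity inequality (\ref{eps.convexity.inequality}) gives $\ska{\fe{\Xi}-\fe{\xii},\nabla E^i}\geq 0$, so this term may be discarded; the same holds for the nonnegative term $\tau\lambda\nos{E^i}$. This cancellation is exactly what makes the resulting bound independent of $\eps$ and $h$. For the forcing contribution I would use Young's inequality, $\tau\lambda\ska{\Pi_h(g_n-g),E^i}\leq \tfrac{\tau\lambda}{2}\nos{\Pi_h(g_n-g)} + \tfrac{\tau\lambda}{2}\nos{E^i}$, together with the $\L$-stability of the projection in (\ref{Interpolation_properties_1}), namely $\nos{\Pi_h(g_n-g)}\leq\nos{g_n-g}$; summing the term $\tfrac{\tau\lambda}{2}\nos{g_n-g}$ over the (at most $T/\tau$) time steps produces precisely the claimed factor $\lambda$ in front of $\nos{g-g_n}$, and the remaining $\tfrac{\tau\lambda}{2}\nos{E^i}$ is handled by the Gronwall step.

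For the stochastic term I would reproduce the argument from the proofs of Lemma~\ref{Lemma_Discrete a priori estimates} and Lemma~\ref{Differenece_time_space}: split $\ska{E^{i-1},E^i}\Delta_i W = \ska{E^{i-1},E^i-E^{i-1}}\Delta_i W + \nos{E^{i-1}}\Delta_i W$, use that $E^{i-1}$ is $\F_{t_{i-1}}$-measurable and independent of $\Delta_i W$ with $\E{\Delta_i W}=0$ to annihilate the second summand in expectation, and bound the first by $\tfrac12\nos{E^{i-1}}\bes{\Delta_i W} + \tfrac12\nos{E^i - E^{i-1}}$, so that after using $\E{\bes{\Delta_i W}}=\tau$ the increment term $\tfrac12\E{\nos{E^i-E^{i-1}}}$ cancels the corresponding term on the left. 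Taking expectations, summing over $i$ and using $m\tau\leq T$ leaves an inequality of the form $\tfrac12\E{\nos{E^m}} \leq \tfrac12\E{\nos{E^0}} + C\tau\sum_{k}\E{\nos{E^k}} + \tfrac{T\lambda}{2}\nos{g_n-g}$, to which the discrete Gronwall lemma applies once $\tau$ is small enough to absorb the implicit term at index $i$ (e.g. $\tau\leq\tfrac12$). Finally, since $E^0 = \Pi_h x_0^n - \Pi_h x_0 = \Pi_h(x_0^n-x_0)$, projection stability gives $\E{\nos{E^0}}\leq\E{\nos{x_0-x_0^n}}$, which closes the estimate.

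I do not expect a serious obstacle here. In contrast to Lemma~\ref{Differenece_time_space}, where the two compared solutions live in different spaces and a projection consistency error $\nabla(\Xii-\Pi_h\Xii)$ must be controlled through the $\delta^{-1/2}h$-type estimate (\ref{Interpolation_convergence}), the present comparison is between two solutions of the \emph{same} scheme in the \emph{same} space $\mathbb{V}_h$, so $E^i$ tests legitimately and the nonlinearity cancels exactly by monotonicity, leaving a bound with no dependence on $\eps$, $h$ or $\delta$. The only points that genuinely require care are the bookkeeping of the Young splitting (to produce the coefficient $\lambda$ rather than $\lambda^2$) and verifying the smallness condition on $\tau$ under which the discrete Gronwall argument closes.
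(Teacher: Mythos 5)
Your proposal follows essentially the same route as the paper's own proof: subtract the two schemes, test with the difference $E^i \in \mathbb{V}_h$, use the polarization identity, discard the nonlinear term by the monotonicity (\ref{eps.convexity.inequality}), handle the forcing term by Young's inequality, treat the stochastic term by the standard splitting/independence argument, and close with the discrete Gronwall lemma and the $\L$-stability of $\Pi_h$. The only (cosmetic) difference is that the paper keeps the term $\tau\lambda\nos{E^i}$ on the left and uses it to absorb the Young remainder $\tfrac{\tau\lambda}{2}\nos{E^i}$, so that all Gronwall terms sit at indices $k\le i-1$ and no smallness condition on $\tau$ is needed; your restriction $\tau\le\tfrac12$ is therefore avoidable, which matters since the lemma is stated for arbitrary $N\in\N$.
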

\begin{proof}
We  define $\Zii=\Xi -\xii$. After subtracting \eqref{full_eps_TVF} and \eqref{full_eps_TVF_L2_data} we get
\begin{align*}
\ska{\Zii,\vh}=& \ska{\Zmi,\vh}
-\tau \ska{\fe{\Xi}-\fe{\xii},\nabla \vh}
\\
&-\tau\lambda\ska{\Zii,\vh}-{\tau\lambda\ska{g_{h} - g_{n,h},\vh}}
+\ska{\Zmi,\vh}\Delta_i W.
\end{align*}
We set $\vh=\Zi$ and obtain
\begin{align}\label{eqz}
\ska{\Zii-\Zmi,\Zii} 
= & -\tau \ska{\fe{\Xi}- \fe{\xii},\nabla\Zii }
\\\nonumber
&-\tau\lambda\nos{\Zii}-\tau\lambda\ska{g_{h}- g_{n,h},\Zii}
+\ska{\Zmi,\Zii}\Delta_i W.
\end{align}
We rewrite the left-hand side in (\ref{eqz}) as
\begin{align*}
\ska{\Zii-\Zmi,\Zmi}= \frac{1}{2}\nos{\Zii}-\frac{1}{2}\nos{\Zmi}+\frac{1}{2}\nos{\Zii -\Zmi}\,,
\end{align*}
and by the Cauchy-Schwarz and Young inequalities we estimate
\begin{align*}
  &\tau \lambda \ska{g_{h}-g_{n,h} ,\Zii}\leq  \frac{\tau \lambda}{2} \nos{g_{h}-g_{n,h}} +\frac{\tau \lambda}{2}\nos{\Zii}.
\end{align*}
Furthermore, the convexity (\ref{eps.convexity.inequality}) implies that
\begin{align*}
-\tau \ska{\fe{\Xi}-\fe{\xii},\nabla(\Xi-\xii)}\leq 0.
\end{align*}
Using the above estimates we deduce from (\ref{eqz}) that
\begin{align}\label{zest1}
\frac{1}{2}&\nos{\Zii}+\frac{1}{2}\nos{\Zii -\Zmi} - \frac{1}{2}\nos{\Zmi}
+ \frac{\tau \lambda}{2}\nos{\Zmi}
\\
&\leq \frac{\tau \lambda}{2} \nos{g_{h}-g_{n,h}} +\ska{\Zmi,\Zii}\Delta_i W\,. \nonumber
\end{align}
We estimate the last term on the right-hand side above as
\begin{align*}
\ska{\Zmi,\Zii}\Delta_i W=\ska{\Zmi,\Zii-\Zmi}\Delta_i W+\nos{\Zmi}\Delta_i W\\
\leq \frac{1}{2}\nos{\Zii-\Zmi}+\frac{1}{2}\nos{\Zmi}\bes{\Delta_i W} +\nos{\Zmi}\Delta_i W.
\end{align*}
{We substitute the above identity into (\ref{zest1}), neglecting the positive term multiplied by $\lambda$ on the left-hand side and arrive at}
\begin{align*}
\frac{1}{2}\nos{\Zii} - \frac{1}{2}\nos{\Zmi} 
\leq &  \frac{\tau \lambda}{2} \nos{g_{h}-g_{n,h}} 
+\frac{1}{2}\nos{\Zmi}\bes{\Delta_i W}+\nos{\Zmi}\Delta_i W.
\end{align*}
Hence, we sum the above inequality over $i$, take expectation and obtain 
\begin{align*}
\frac{1}{2}\E{\nos{\Zii}} 
\leq&\frac{1}{2}\E{\nos{Z^0_{\eps}}} +\frac{\tau}{2}\sum_{k=0}^{i-1}\E{\nos{Z^k_{\eps,h}}}+ \frac{T \lambda}{2} \nos{g_{h}-g_{n,h}}.
\end{align*}
Finally, an application of the discrete Gronwall lemma yields that
\begin{align*}
\max_{i=1,\ldots,N}\E{\nos{\Zii}} \leq C\left(\E{\nos{\Pi_h(x_0- x_0^{n})}}+ \lambda\nos{\Pi_h(g-g_{n})} \right),
\end{align*}
and the statement of the lemma follows by the stability of the $\L$-projection (\ref{Interpolation_properties_1}).
\end{proof}

We define piecewise constant time-interpolants
of the discrete  solutions $\{\Xii \}_{i=0}^N $ of \eqref{semi_eps_TVF}, $\{\Xi \}_{i=0}^N$ of (\ref{full_eps_TVF}) and $\{\xii \}_{i=0}^N$ of (\ref{full_eps_TVF_L2_data}) for $t\in[0,T)$ as
\begin{align}\label{eps_interpol}
\overline{X}_{\tau}^{\delta,n}=\Xii,~~ \overline{X}_{\tau,h}^{\eps,n}=\Xi, ~~\overline{X}_{\tau,h}^{\eps} = \xii\quad \mathrm{if}\quad t \in (t_{i-1},t_i]. 
\end{align}

In the next theorem we conclude the paper by showing the convergence of the fully discrete numerical approximation (\ref{full_eps_TVF_L2_data}) to the unique SVI solution of the total variation flow (\ref{TVF})
(cf. Definition~\ref{def_svi}).
\begin{thms}\label{Thm_Convergence_num.reg.Scheme}
Let $X$ be the SVI solution of \eqref{TVF} and let $\overline{X}_{\tau,h}^{\eps} $ be the time-interpolant (\ref{eps_interpol})
of the solutions of the fully-discrete scheme \eqref{full_eps_TVF_L2_data}.
Then 
\begin{align}\label{numconv}
\lim_{\eps\rightarrow 0}\lim_{\tau,h \rightarrow 0}\nos{X-\overline{X}_{\tau,h}^{\eps} }_{L^2(\Omega \times (0,T);\L)}\rightarrow 0.
\end{align}
\end{thms}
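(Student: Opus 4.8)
The plan is to bridge between the SVI solution $X$ and the target interpolant $\overline{X}_{\tau,h}^{\eps}$ through the three intermediate objects already analysed above, and then to let all parameters tend to their limits in a carefully nested order. For a fixed $\eps\in(0,1]$ and \emph{arbitrary} auxiliary parameters $\delta>0$, $n\in\N$ I would start from the telescoping identity
\begin{align*}
X-\overline{X}_{\tau,h}^{\eps}
=\big(X-\Xe_n\big)+\big(\Xe_n-\Yc\big)+\big(\Yc-\Xc\big)+\big(\Xc-\overline{X}_{\tau,h}^{\eps}\big),
\end{align*}
where $\Xe_n$ is the variational solution of \eqref{reg.TVF} and $\Yc$, $\Xc$ are the time-interpolants \eqref{eps_interpol} of the semi-discrete scheme \eqref{semi_eps_TVF} and of the auxiliary fully discrete scheme \eqref{full_eps_TVF}. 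Taking the $L^2(\Omega\times(0,T);\L)$-norm and applying the triangle inequality bounds $\no{X-\overline{X}_{\tau,h}^{\eps}}$ by a sum $A+B+C+D$ of four norms; since convergence of this norm to zero is equivalent to \eqref{numconv}, it suffices to drive each term to zero in the appropriate limit.

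First I would dispose of the terms already covered by earlier results. The term $A=\no{X-\Xe_n}$ does not depend on $\tau,h$ and is controlled by Theorem~\ref{Thm.SVI}, whose iterated limit $\delta\to0$, $n\to\infty$, $\eps\to0$ (together with $A^2\le T\,\E{\supt\nos{X-\Xe_n}}$) forces it to zero. The term $B=\no{\Xe_n-\Yc}$ tends to zero as $\tau\to0$ for fixed $\eps,\delta,n$ by Lemma~\ref{Lemma_Convergence_num.vis.Scheme}. For the remaining two terms I would use that the interpolants are piecewise constant in time, so that $C^2\le T\max_{i}\E{\nos{\Xii-\Xi}}$ and $D^2\le T\max_i\E{\nos{\Xi-\xii}}$; Lemma~\ref{Differenece_time_space} then gives
\begin{align*}
C^2\le C\,T\big(C_n h+C_n^{1/2}\delta^{-1/2}h+C_n\delta+\lambda\nos{g_n-g_{n,h}}\big),
\end{align*}
while Lemma~\ref{Lemma_Difference-num.Schemes} yields the bound $D^2\le C\,T\big(\E{\nos{x_0-x_0^n}}+\lambda\nos{g-g_n}\big)$, which is independent of $\tau,h,\delta,\eps$.

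The conclusion then follows by sending the parameters to their limits in the order $\tau,h\to0$, then $\delta\to0$, then $n\to\infty$, then $\eps\to0$. In the innermost limit $\tau,h\to0$ (with $\eps,\delta,n$ fixed) the term $B$ vanishes; in $C$ the contributions $C_n h$ and $C_n^{1/2}\delta^{-1/2}h$ vanish and $\nos{g_n-g_{n,h}}=\nos{g_n-\Pi_h g_n}\to0$ by \eqref{Interpolation_properties_1}, leaving $\limsup_{\tau,h\to0}C^2\le C\,T\,C_n\delta$, whereas $A$ and $D$ are untouched. Letting $\delta\to0$ then removes the residual $C_n\delta$ and, by Theorem~\ref{Thm.SVI}, sends $A$ to its $\delta\to0$ limit; letting $n\to\infty$ sends $D\to0$ (because $x_0^n\to x_0$ and $g_n\to g$ in $\L$) and $A$ to its $n\to\infty$ limit; finally $\eps\to0$ eliminates the last contribution of $A$, again by Theorem~\ref{Thm.SVI}. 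This establishes \eqref{numconv}.

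The hard part will be the bookkeeping of the artificial viscosity $\delta$. This parameter appears in the semi-discrete scheme \eqref{semi_eps_TVF} and in the continuous regularization \eqref{reg.TVF}, but \emph{not} in the target scheme \eqref{full_eps_TVF_L2_data}; it is therefore a purely analytical device that must disappear from the final estimate. The delicate point is that Lemma~\ref{Differenece_time_space} contains the competing contributions $C_n^{1/2}\delta^{-1/2}h$, singular as $\delta\to0$, and $C_n\delta$, vanishing as $\delta\to0$, while the constant $C_n$ itself blows up as $n\to\infty$. The nested ordering above is precisely what reconciles these: $h\to0$ before $\delta\to0$ neutralises the singular term at fixed $\delta$, then $\delta\to0$ before $n\to\infty$ neutralises $C_n\delta$ at fixed $n$, and this order coincides with the one in which Theorem~\ref{Thm.SVI} controls the term $A$.
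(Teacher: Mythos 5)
Your proposal is correct and follows essentially the same route as the paper: the identical four-term decomposition through $\Xe_n$, $\Yc$, $\Xc$, the same three lemmas plus Theorem~\ref{Thm.SVI} to control each term, and the same nested limit order $\tau,h\to0$, then $\delta\to0$, then $n\to\infty$, then $\eps\to0$. The only detail the paper adds is an explicit construction of the $\Hz$-approximating data $x_0^n=\mathcal{P}_n x_0$, $g_n=\mathcal{P}_n g$ via spectral projections, which you leave implicit but which exists by density, so nothing is lost.
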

\begin{proof}
For $x_0 \in L^2(\Omega,\F_0;\L)$ and $g\in\L$  for $n\in \mathbb{N}$
we set $x_0^n = \mathcal{P}_n x_0$, $g_n= \mathcal{P}_n g$
where $\mathcal{P}_n :\L \rightarrow \mathbb{V}_n$ is the orthogonal $\L$-projection onto the finite dimensional eigenspace $\mathbb{V}_n = \text{span}\{e_0,\ldots, e_n\}\subset \Hz$.
By construction the sequences $\{x_0^n\}_{n\in \mathbb{N}}\subset \mathbb{H}^1_0$, $\{g_n\}_{n\in \mathbb{N}}\subset \mathbb{H}^1_0$ 
satisfy $x_0^n\rightarrow x_0\in L^2(\Omega,\F_0;\L)$, $n\in\mathbb{N}$, $g_n\rightarrow g\in\L$. 
Below, we consider (\ref{reg.TVF}), (\ref{full_eps_TVF}) with the data $x_0^n$, $g_n$ defined above.


By the triangle inequality we get
{\begin{align}\label{err_ineq}
\nonumber
\frac{1}{4}\nos{X-\overline{X}_{\tau,h}^{\eps} }_{L^2(\Omega \times (0,T);\L)} \leq& \nos{X- \Xe_n }_{L^2(\Omega \times (0,T);\L)} +\nos{\Xdn-\Yc}_{L^2(\Omega \times (0,T);\L)}
\nonumber\\
&+\nos{\Yc-\Xc}_{L^2(\Omega \times (0,T);\L)}
+\nos{\Xc-\overline{X}_{\tau,h}^{\eps}}_{L^2(\Omega \times (0,T);\L)}
\\
\nonumber
& =:\mathrm{I+II+III+IV}.
\end{align}
}
From Theorem  \ref{Thm.SVI} it follows that
\begin{align*}
 \lim_{\eps \rightarrow 0} \lim_{n \rightarrow \infty}\lim_{\delta \rightarrow 0} \, \mathrm{I} =  \lim_{\eps \rightarrow 0}   \lim_{n \rightarrow \infty}\lim_{\delta \rightarrow 0} \E{\nos{X-\Xe_n}}=0.
\end{align*}
By Lemma~\ref{Lemma_Convergence_num.vis.Scheme} we deduce for the second term that
\begin{align*}
\lim_{\tau\rightarrow 0}\mathrm{II}=\lim_{\tau\rightarrow 0}\E{\nos{\Xdn-\Yc}} =0.
\end{align*} 
{For the third term we get by Lemma \ref{Differenece_time_space}} and (\ref{Interpolation_properties_1}) that
\begin{align*}
\lim_{\delta \rightarrow 0}\lim_{\tau,h \rightarrow 0}\mathrm{III} \leq
 \lim_{\delta \rightarrow 0}\lim_{\tau,h \rightarrow 0} C \left(C_n h   + C_n^{1/2}\delta^{-1/2}h + C_n\delta + \nos{g_n-\Pi_h g_{n}}\right) =0.
\end{align*}
By Lemma \ref{Lemma_Difference-num.Schemes} the fourth term satisfies
\begin{align*}
\lim_{n \rightarrow \infty} \mathrm{IV}\leq 
\lim_{n \rightarrow \infty} \max_{i=1,\ldots,N}\E{\nos{\Xi-\xii}}
\leq  \lim_{n \rightarrow \infty} C\left(\E{\nos{x_0-x^n_0}}+\nos{g-g_n} \right )= 0.
\end{align*}
Finally, we consecutively take  $\tau,h \rightarrow 0$,  $\delta \rightarrow 0$,
$n\rightarrow \infty$ and $\eps \rightarrow 0$ in (\ref{err_ineq})
and use the above convergence of $\mathrm{I-IV}$ to obtain (\ref{numconv}).
\end{proof}

\bibliographystyle{plain}
\bibliography{refs_short}

\end{document}